\newcolumntype{R}{>{$}r<{$}} %
\newcolumntype{V}[1]{>{[\;}*{#1}{R@{\;\;}}R<{\;]}} %
\pgfplotsset{compat=1.15}
\Crefname{ALC@unique}{Line}{Lines}
\colorlet{texcscolor}{blue!50!black}
\colorlet{texemcolor}{red!70!black}
\colorlet{texpreamble}{red!70!black}
\colorlet{codebackground}{black!25!white!25}
\lstdefinestyle{siamlatex}{%
  style=tcblatex,
  texcsstyle=*\color{texcscolor},
  texcsstyle=[2]\color{texemcolor},
  keywordstyle=[2]\color{texemcolor},
  moretexcs={cref,Cref,maketitle,mathcal,text,headers,email,url},
}
\DeclareTotalTCBox{\code}{ v O{} }
{ 
  fontupper=\ttfamily\color{black},
  nobeforeafter,
  tcbox raise base,
  colback=codebackground,colframe=white,
  top=0pt,bottom=0pt,left=0mm,right=0mm,
  leftrule=0pt,rightrule=0pt,toprule=0mm,bottomrule=0mm,
  boxsep=0.5mm,
  #2}{#1}
\patchcmd\newpage{\vfil}{}{}{}
\numberwithin{equation}{section}
\def\vec{{\mathrm{vec}}}
\def\diag{{\mathrm{diag}}}
\def\c{\mathbf{c}}
\def\f{\mathbf{f}}
\def\h{\mathbf{h}}
\def\bv{\mathbf{v}}
\def\M{\mathcal{M}}
\def\0{\mathbf{0}}
\def\1{\mathbf{1}}
\def\I{\mathtt{I}}
\def\B{\mathtt{B}}
\def\beps{{\boldsymbol{\varepsilon}}}
\def\blam{{\boldsymbol{\lambda}}}
\def\bsigma{{\boldsymbol{\sigma}}}
\DeclareMathOperator*{\argmin}{argmin}
\title{Convergence Analysis of Volumetric Stretch Energy Minimization and its Associated Optimal Mass Transport
\thanks{Submitted to the editors \today.
\funding{The work of the authors was partially supported by the National Science and Technology Council, the National Center for Theoretical Sciences, and the ST Yau Center in Taiwan. T.-M. Huang, W.-W. Lin, and M.-H. Yueh was partially supported by NSTC 110-2115-M-003-012-MY3, 110-2115-M-A49-004- and 111-2115-M-003-016-, respectively.}}
}
\author{
Tsung-Ming Huang\thanks{Department of Mathematics, National Taiwan Normal University, Taipei, 116, Taiwan (\email{min@ntnu.edu.tw}).}
\and
Wei-Hung Liao\thanks{Department of Applied Mathematics, National Yang Ming Chiao Tung University, Hsinchu, 300, Taiwan (\email{roger2300245@gmail.com}).}
\and
Wen-Wei Lin\thanks{Department of Applied Mathematics, National Yang Ming Chiao Tung University, Hsinchu, 300, Taiwan (\email{wwlin@math.nctu.edu.tw}).}
\and
Mei-Heng Yueh\thanks{Department of Mathematics, National Taiwan Normal University, Taipei, 116, Taiwan (\email{yue@ntnu.edu.tw}).}
\and
Shing-Tung Yau\thanks{Yau Mathematical Sciences Center, Tsinghua University, Beijing, 100084, China (\email{styau@tsinghua.edu.cn}).}
}
\begin{document}
\begin{sloppypar}

\maketitle

\begin{abstract}
The volumetric stretch energy has been widely applied to the computation of volume-/mass-preserving parameterizations of simply connected tetrahedral mesh models. However, this approach still lacks theoretical support. In this paper, we provide the theoretical foundation for volumetric stretch energy minimization (VSEM) to compute volume-/mass-preserving parameterizations. In addition, we develop an associated efficient VSEM algorithm with guaranteed asymptotic R-linear convergence. Furthermore, based on the VSEM algorithm, we propose a projected gradient method for the computation of the volume/mass-preserving optimal mass transport map with a guaranteed convergence rate of $\mathcal{O}(1/m)$, and combined with Nesterov-based acceleration,  the guaranteed convergence rate becomes $\mathcal{O}(1/m^2)$. Numerical experiments are presented to justify the theoretical convergence behavior for various examples drawn from known benchmark models. Moreover, these numerical experiments show the effectiveness and accuracy of the proposed algorithm, particularly in the processing of 3D medical MRI brain images.
\end{abstract}

\begin{keyword}
volume-/mass-preserving parameterization, optimal mass transport, R-linear convergence, projected gradient method, Nesterov-based acceleration, $\mathcal{O}(1/m)$ convergence
\end{keyword}

\begin{MSCcodes}
68U05, 65D18, 52C35, 33F05, 65E10
\end{MSCcodes}

\section{Introduction}

Volume-/mass-preserving parameterization of a 3-manifold $\mathcal{M}$ with a single closed genus-zero boundary by a unit ball $\mathbb{B}^3$, as well as its associated optimal mass transport (OMT), has been widely applied to computer graphics \cite{FlHo05}, digital geometry \cite{HoLe07}, medical image segmentation \cite{LiJY21,LiLH22}, image retrieval \cite{li2013,rubner2000}, image representation and registration \cite{HaZT04,kolouri2017,kolouri2016,wang2013} and generative adversarial networks \cite{LeSC19}. In calculus, we learn that a 2-manifold or 3-manifold can be represented by a given 2D or 3D coordinate system, and then the related curvatures, singularities, maxima, minima, local areas or volume can be computed. In contrast, in practical applications, irregular manifold domains are usually obtained by scanning or sampling data from physical objects. For instance, CT, MRI and PET images are the most common in medical diagnosis, real-time scanning images by satellite for space engineering and weather forecasting, photomicrography in physical and biological engineering, target or obstacle detection for autonomous driving systems and missile navigation, etc. One intuitive idea is to directly calculate numerical solutions for complex problems on irregular manifolds. If the irregular manifold domain is too technically challenging to produce solutions for a complex problem, we should consider the inverse problem of the above problem; that is, the irregular manifold should be parameterized by a regular domain. The most common 3D parametric shapes are a cube or a ball $\mathbb{B}^3$. To this end, efficient algorithms, namely, the volumetric stretch energy minimization (VSEM) method \cite{YuLL19} and OMT methods \cite{GuLS16} for the computation of volume-preserving parameterizations, have recently been highly developed and utilized in applications. In practice, in terms of the effectiveness and accuracy, the VSEM algorithm is much improved compared to the other state-of-the-art algorithms (see \cite{YuLL19} for details). However, VSEM still lacks rigorously mathematical and theoretical support.

In this paper, we first introduce the volumetric stretch energy functional on $\mathcal{M}$ and propose the VSEM algorithm for the computation of the spherical volume-/mass-preserving parameterization between $\mathcal{M}$ and $\mathbb{B}^3$. 
We show that a minimal solution for the volumetric stretch energy functional must be a volume-/mass-preserving map, and vice versa, which successfully supports the setting for our modified volume stretch energy functional. Then, we prove that the VSEM algorithm converges R-linearly under some mild conditions. Next, we consider an early but important OMT problem proposed by Monge in 1781 (see, e.g., \cite{bonnotte2013}) in which a pile of soil is moved from one place to another while preserving the local volume and minimizing the transport cost. Although the set of volume-/mass-preserving maps between $\mathcal{M}$ and $\mathbb{B}^3$ may not be convex, for the discrete OMT problem we still prefer to adopt the projected gradient method to minimize the transport cost for preserving the minimal deformation between $\mathcal{M}$ and $\mathbb{B}^3$. Then, we accelerate the convergence by the Nesterov method \cite{nesterov1983}.
Under mild assumptions of nonexpensiveness and projection properties, the convergence of the projected gradient method can be proven to be a rate of $\mathcal{O}(1/m)$, and with the acceleration, it has a convergence rate of $\mathcal{O}(1/m^2)$.
This numerical algorithm is called the volume-/mass-preserving OMT (VOMT) algorithm. It is effective, reliable and robust.

The main contributions of this paper are threefold.
\begin{enumerate}
\item We prove a fundamental theorem that $f^*$  is the minimal solution of the discrete volumetric stretch energy functional if and only if $f^*$ is volume-/mass-preserving between $\mathcal{M}$ and $\mathbb{B}^3$. Based on this mathematical foundation, we develop a VSEM for the computation of the minimal solution $f^*$ and show the R-linear convergence of VSEM.
\item For the discrete OMT problem, we use the gradient method combined with VSEM as a projector to develop an efficient numerical algorithm, VOMT, for solving the spherical VOMT problem from $\mathcal{M}$ to $\mathbb{B}^3$ with a convergence rate of $\mathcal{O}(1/m)$ and, accelerated by the Nesterov method, with a convergence rate of $\mathcal{O}(1/m^2)$.
\item In practical applications on various benchmarks, numerical experiments confirm that the assumption for R-linear convergence is satisfied, and the related means and standard deviation of the ratios of local volume distortions show the effectiveness and exactness of the proposed VOMT algorithm.
\end{enumerate}

The remaining part of this paper is organized as follows:
In Section \ref{sec:2}, we introduce the discrete $3$-manifold and the volumetric stretch energy. In Section~\ref{sec:3}, we propose a rigorous derivation for the equivalence relationship between the volume-/mass-preserving map and the minimizer of the volumetric stretch energy. In Section \ref{sec:4}, we prove the convergence of the VSEM algorithm in \cite{YuLL19} for the computation of volume-/mass-preserving parameterizations, which provides theoretical support for the VSEM. Then, we introduce the associated VOMT algorithm as well as its convergence analysis in Section \ref{sec:5}. Numerical experiments for the VSEM and VOMT are demonstrated in Section \ref{sec:6} to validate the consistency between the theoretical and numerical results. Concluding remarks are given in Section \ref{sec:7}.

In this paper, we use the following notations:
\begin{itemize}
\item Bold letters, e.g., $\mathbf{f}$, denote real-valued vectors or matrices.
\item Capital letters, e.g., $L$, denote real-valued matrices.
\item Typewriter letters, e.g., $\mathtt{I}$ and $\mathtt{B}$, denote ordered sets of indices.
\item $\mathbf{f}_i$ denotes the $i$th row of the matrix $\mathbf{f}$.
\item $\mathbf{f}^s$ denotes the $s$th column of the matrix $\mathbf{f}$.
\item $\mathbf{f}_\mathtt{I}$ denotes the submatrix of $\mathbf{f}$ composed of $\mathbf{f}_i$, for $i\in\mathtt{I}$.
\item $L_{i,j}$ denotes the $(i,j)$th entry of the matrix $L$.
\item $L_{\mathtt{I},\mathtt{J}}$ denotes the submatrix of $L$ composed of $L_{i,j}$ for $i\in\mathtt{I}$ and $j\in\mathtt{J}$.
\item $\mathbb{R}$ denotes the set of real numbers.
\item $\mathbb{B}^3:=\{ \mathbf{x}\in\mathbb{R}^{3} \mid \|\mathbf{x}\| \leq 1 \}$ denotes the solid ball in $\mathbb{R}^{3}$.
\item $\left[\bv_0, \ldots, \bv_k\right]$ denotes the $k$-simplex with vertices $\bv_0, \ldots, \bv_k$.
\item $|[\bv_0, \ldots, \bv_k]|$ denotes the volume of the $k$-simplex $\left[\bv_0, \ldots, \bv_k\right]$.
\item $\mathbf{0}$ and $\mathbf{1}$ denote the zero and one vectors or matrices of appropriate sizes, respectively.
\item Given two vectors $\mathbf{x}$ and $\mathbf{y}$, $\mathbf{x} \cdot \mathbf{y}$ denotes the inner product $\mathbf{x}^{\top} \mathbf{y}$.
\item Hashtag $\#$ of a set, e.g., $\#(\mathbb{T}(\mathcal{M}))$, denotes the number of elements in $\mathbb{T}(\mathcal{M})$.
\end{itemize}

\section{Discrete 3-manifold and volumetric stretch energy}
\label{sec:2}

Let $\mathcal{M}\subset\mathbb{R}^3$ be a simply connected 3-manifold with a single genus-zero boundary. A discrete 3-manifold model for $\mathcal{M}$ is a simplicial $3$-complex with $n$ vertices
\begin{align*}
\mathbb{V}(\mathcal{M})=\{\bv_t=({v}_t^1, {v}_t^2, {v}_t^3)\in\mathbb{R}^3\}_{t=1}^n,
\end{align*}
and tetrahedra
\begin{align*}
\mathbb{T}(\mathcal{M})=\{[\bv_i, \bv_j, \bv_k, \bv_\ell]\subset\mathbb{R}^3 \text{ for some $\bv_r\in\mathbb{V}(\mathcal{M})$, $r=i, j, k, \ell$}\},
\end{align*}
where the bracket $[\bv_i, \bv_j, \bv_k, \bv_\ell]$ is the 3-simplex (convex hull) of the affinely independent points $\{\bv_r \mid r=i, j, k, \ell\}$. Additionally, the triangular faces and edges of $\mathcal{M}$ are denoted by
\begin{align*}
\mathbb{F}(\mathcal{M})=\{[\bv_i, \bv_j, \bv_k] \mid [\bv_i, \bv_j, \bv_k, \bv_\ell]\in\mathbb{T}(\mathcal{M}) \text{ with some $\bv_\ell\in\mathbb{V}(\mathcal{M})$}\}
\end{align*}
and 
\begin{align*}
\mathbb{E}(\mathcal{M})=\{[\bv_i, \bv_j] \mid [\bv_i, \bv_j, \bv_k]\in\mathbb{F}(\mathcal{M}) \text{ with some $\bv_k\in\mathbb{V}(\mathcal{M})$}\}.
\end{align*}

Because an affine map in $\mathbb{R}^3$ is determined by four independent point correspondences, a piecewise affine map $f:\mathcal{M}\to\mathbb{R}^3$ on a tetrahedral mesh $\mathcal{M}$ can be expressed as an $n\times 3$ matrix defined by the images $f(\bv_t)$ of vertices $\bv_t\in\mathbb{V}(\mathcal{M})$ as
\begin{align} \label{eq:f}
\mathbf{f}:=[\f_1^{\top}, \cdots, \f_n^{\top}]^{\top}\in\mathbb{R}^{n\times 3},    
\end{align}
where $\f_t :=f(\bv_t)=({f}_t^1, {f}_t^2, {f}_t^3)\in\mathbb{R}^{3}$, for $t=1, \dots, n$. We also denote
\begin{align*}
\mathbf{f}=\begin{bmatrix}
\mathbf{f}^1 & \mathbf{f}^2 & \mathbf{f}^3
\end{bmatrix}, \quad \mathbf{f}^s=\begin{bmatrix}
{f}_1^s & \cdots & {f}_n^s
\end{bmatrix}^{\top},\  s=1, 2, 3.
\end{align*}
For a point $\bv\in\mathcal{M}$, $\bv$ must belong to a tetrahedron $\tau$; without loss of generality, let $\tau=[\bv_1, \bv_2, \bv_3, \bv_4]$. Then, the piecewise affine map $f(\bv):\mathbb{T}(\mathcal{M})\to\mathbb{R}^3$ can be expressed as a linear combination of $\{\f_i\}_{i=1}^4$ with barycentric coordinates, i.e.,
\begin{align*}
f\rvert_{\tau}(\bv)=\sum_{i=1}^4\lambda_i f(\bv_i), \quad\lambda_i=\frac{1}{|\tau|}|[\bv_1,\cdots,\widehat{\bv}_i,\cdots, \bv_4]|
\end{align*}
where $\widehat{\bv}_i$ is replaced by $\bv$ for $i=1, \ldots, 4$, and $|\tau|$ denotes the volume of the $3$-simplex $\tau$. The piecewise affine map $f:\mathcal{M}\to\mathbb{B}^3$ is said to be induced by $\mathbf{f}$ and is volume-/mass-preserving if the Jacobian $J_{f^{-1}}=[\frac{\partial f^{-1}}{\partial u^1}, \frac{\partial f^{-1}}{\partial u^2}, \frac{\partial f^{-1}}{\partial u^3}]$ satisfies
\begin{equation} \label{eq:detJ}
\det(J_{f^{-1}}\rvert_{f(\tau)})=1
\end{equation}
with $f(\tau)=[\mathbf{f}_i, \mathbf{f}_j, \mathbf{f}_k, \mathbf{f}_{\ell}]$ for every $\tau =[\bv_i, \bv_j, \bv_k, \bv_{\ell}] \in\mathbb{T}(\mathcal{M})$. Like the derivation in \cite[Appendix A]{YuLL19}, $f:\mathcal{M}\to\mathbb{R}^3$ is volume-/mass-preserving with respect to the tetrahedral volume measure $\mu:\mathbb{T}(\mathcal{M})\to\mathbb{R}_+$ if and only if \eqref{eq:detJ} holds for every $\tau\in\mathbb{T}(\mathcal{M})$. We denote the stretch factor with respect to $\mu$ as
\begin{align}
\sigma_{\mu, f^{-1}}(\tau) = \mu(\tau)/|f(\tau)|.  \label{eq:stretch_factor}
\end{align}
The original VSEM \cite{YuLL19} computes a spherical volume-preserving parameterization between $\mathcal{M}$ and $\mathbb{B}^3$ with $\mu(\tau)=|\tau|$ by minimizing the volumetric stretch energy functional,
\begin{align}
    E_V(f)=\frac{1}{2}\mathrm{trace}(\mathbf{f}^{\top}L_V(f)\mathbf{f}) \label{eq:energy_fun}
\end{align}
where $L_V(f)$ is a volumetric stretch Laplacian matrix with
\begin{subequations} \label{eq:Lap_mtx}
\begin{align}
\label{eq:2}
[L_V(f)]_{ij}=[L_V(f)]_{ij}^{\top}=
\begin{cases}
w_{ij}(f), & \mbox{if $[\bv_i, \bv_j]\in\mathbb{E}(\mathcal{M})$},\\
-\sum_{\ell\neq i}w_{i\ell}(f), & \mbox{if $j=i$},\\
0, & \mbox{otherwise},
\end{cases}
\end{align}
in which, like \cite{YuHL21}, the modified weight $w_{ij}(f)$ is defined by
\begin{align}
w_{ij}(f)
=&-\frac{1}{9}\sum_{\substack{\tau\in\mathbb{T}(\mathcal{M})\\ [\bv_i,\bv_j]\cup[\bv_k, \bv_\ell]\subset\tau\\ [\bv_i,\bv_j]\cap[\bv_k, \bv_\ell]=\emptyset}}\frac{|f([\bv_i, \bv_k, \bv_\ell])||f([\bv_j, \bv_\ell, \bv_k])|\cos\theta_{i, j}^{k, \ell}(f)}{\sigma_{\mu,f^{-1}}(\tau)|f(\tau)|}\nonumber\\
=&-\frac{1}{36}\sum_{\substack{\tau\in\mathbb{T}(\mathcal{M})\\ [\bv_i,\bv_j]\cup[\bv_k, \bv_\ell]\subset\tau\\ [\bv_i,\bv_j]\cap[\bv_k, \bv_\ell]=\emptyset}}\frac{\left(2|f([\bv_i, \bv_k, \bv_\ell])|\right)\left(2|f([\bv_j, \bv_k, \bv_\ell])|\right)\cos\theta_{i, j}^{k, \ell}(f)}{\mu(\tau)}\nonumber\\
=&-\frac{1}{36}\sum_{\substack{\tau\in\mathbb{T}(\mathcal{M})\\ [\bv_i,\bv_j]\cup[\bv_k, \bv_\ell]\subset\tau\\ [\bv_i,\bv_j]\cap[\bv_k, \bv_\ell]=\emptyset}}\frac{[(\mathbf{f}_k-\mathbf{f}_i)\times(\mathbf{f}_{\ell}-\mathbf{f}_i)]^\top[(\mathbf{f}_{\ell}-\mathbf{f}_j)\times(\mathbf{f}_k-\mathbf{f}_j)]}{\mu(\tau)}.\label{eq:3-3}
\end{align}
\end{subequations}
Here, $\theta_{i, j}^{k, \ell}(f)$ is the dihedral angle between the triangular faces $f([\bv_i, \bv_k, \bv_\ell])$ and $f([\bv_j, \bv_\ell, \bv_k])$ in tetrahedron $f(\tau)$. In Sections~\ref{sec:3} and \ref{sec:4} we will provide the theoretical foundation of SVEM.

\begin{remark}
In practice, the dihedral angle $\theta_{i,j}^{k,\ell}(f)$ is computed by using the identity
$$
\cos(\pi-\theta_{i,j}^{k,\ell}(f)) = \mathbf{n}_{i,k,\ell}(f)^\top \mathbf{n}_{j,k,\ell}(f),
$$
where $\mathbf{n}_{i,j,k}(f)$ denotes the unit normal vector of face $f([v_i,v_j,v_k])$.
\end{remark}

\section{Volume-/mass-preserving parameterization vs. volumetric stretch energy minimizer}
\label{sec:3}

Let $\mathcal{M}$ be a simply connected 3-manifold with a genus-zero boundary. We consider the volumetric stretch energy functional on $\mathcal{M}$ as in \eqref{eq:energy_fun},
\begin{align}
\label{eq:1}
E_V(f)=\frac{1}{2}\textrm{trace}\left(\mathbf{f}^{\top}L_V(f)\mathbf{f}\right)=\frac{1}{2}\sum_{s=1}^3 {\mathbf{f}^s}^{\top}L_V(f)\mathbf{f}^s,
\end{align}
where $L_V(f)$ is the volumetric stretch Laplacian matrix as in \eqref{eq:Lap_mtx}.

In \Cref{subsec:3.1}, we first show the equivalence of volumetric stretch energy minimizers and volume-/mass-preserving parameterizations. 
In \Cref{subsec:3.2}, we provide a neat gradient formula of $E_V$ so that the minimizers of $E_V$ with a fixed spherical boundary constraint can be conveniently derived in \Cref{{subsec:min_VSEM_fb}}. 

To simplify the derivations in Subsections \ref{subsec:3.1} and \ref{subsec:3.2}, we denote the volumetric stretch energy restricted to a tetrahedron $\tau \in \mathbb{T}(\mathcal{M})$ as $E_{\tau}(f(\tau))$ with Laplacian matrix $L_{\tau}(f) \equiv L_V(f)\rvert_{\tau}$. Then, the volumetric stretch energy functional $E_V(f)$ in \eqref{eq:1} is equal to the summation of all $E_{\tau}(f(\tau))$. According to \eqref{eq:Lap_mtx}, we give a new representation of $L_{\tau}(f)$ as follows.

Without loss of generality, let $\tau = [\bv_1,\bv_2,\bv_3,\bv_4]$ be a tetrahedron in $\mathbb{T}(\M)$. 
Then, the image of $f(\tau)$ is  $[\f_1, \f_2, \f_3, \f_4]$.
By substituting \eqref{eq:3-3} into $L_V(f)\rvert_{\tau}$ of \eqref{eq:2}, we obtain
\begin{subequations}\label{eq:8}
\begin{align}
\label{eq:8_1}
L_{\tau}(f)= - \frac{1}{36\mu(\tau)}\left[a_{ij}\right] \in \mathbb{R}^{4 \times 4}, 
\end{align}
where, for $[\bv_i,\bv_j]\cap[\bv_k, \bv_\ell]=\emptyset$ and $i, j, k, \ell \in\{1, 2, 3, 4\}$,
\begin{align}
    a_{ij}=[(\mathbf{f}_{k}-\mathbf{f}_{i})\times(\mathbf{f}_{\ell}-\mathbf{f}_{i})]^\top[(\mathbf{f}_{\ell}-\mathbf{f}_{j})\times(\mathbf{f}_{k}-\mathbf{f}_{j})], \quad a_{ij}=a_{ji}\label{eq:8_2}
\end{align}
and
\begin{align}
a_{ii}=-\sum_{\substack{j\neq i}}a_{ij}.
\end{align}
\end{subequations}
Applying the fundamental identity
\begin{align*}
(\mathbf{a}\times\mathbf{b})^\top(\mathbf{x}\times\mathbf{y})=(\mathbf{a}^\top\mathbf{x})(\mathbf{b}^\top\mathbf{y})-(\mathbf{a}^\top\mathbf{y})(\mathbf{b}^\top\mathbf{x}),
\end{align*}
$a_{ij}$ of \eqref{eq:8_2} can be expanded into the form
\begin{align}\label{fundId}
a_{ij}
=&-[(\f_{k}-\f_{i})^\top(\f_{k}-\f_{j})][(\f_{\ell}-\f_{i})^\top(\f_{\ell}-\f_{j})]\\
& +[(\f_{k}-\f_{i})^\top(\f_{\ell}-\f_{j})][(\f_{\ell}-\f_{i})^\top(\f_{k}-\f_{j})].\nonumber
\end{align}  

\subsection{Equivalence of volumetric stretch energy minimizers and volume-/mass-preserving parameterizations}
\label{subsec:3.1}

First, we provide a geometric interpretation of $E_V$ in the following theorem, which is the crucial step for the proof of minimizers of $E_V$ being volume-/mass-preserving, and vice versa.

\begin{theorem} \label{thm:E_V}
The volumetric stretch energy functional \eqref{eq:1} can be reformulated as
\begin{align}
\label{Vfsem}
E_V(f)=\sum_{\tau\in\mathbb{T}(\mathcal{M})}\frac{3 |f(\tau)|^2}{2\mu(\tau)}.
\end{align}
\end{theorem}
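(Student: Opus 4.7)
My plan is to prove the identity tetrahedron-by-tetrahedron, reducing the global claim to showing that for each $\tau=[\bv_1,\bv_2,\bv_3,\bv_4]\in\mathbb{T}(\M)$ with image $f(\tau)=[\f_1,\f_2,\f_3,\f_4]$, one has
\[
E_\tau(f(\tau)) := \tfrac{1}{2}\tr(\f_\tau^\top L_\tau(f)\,\f_\tau) = \frac{3\,|f(\tau)|^2}{2\,\mu(\tau)},
\]
where $\f_\tau\in\R^{4\times 3}$ stacks the four image vertices; summing over $\tau\in\mathbb{T}(\M)$ then yields \eqref{Vfsem}.

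The central step is a Gram-type factorization of the matrix $[a_{ij}]$ from \eqref{eq:8_1}. For each $m\in\{1,2,3,4\}$, let $\mathbf{A}_m$ denote the outward area-vector of the triangular face of $f(\tau)$ opposite $\f_m$, so that $\|\mathbf{A}_m\|$ equals that face's area and $\mathbf{A}_m/\|\mathbf{A}_m\|$ is its outward unit normal. For the two opposite edges $[\bv_i,\bv_j]$ and $[\bv_k,\bv_\ell]$ appearing in \eqref{eq:8_2}, the cross product $(\f_k-\f_i)\times(\f_\ell-\f_i)$ represents (up to sign) the area-vector of the face opposite $\f_j$, and $(\f_\ell-\f_j)\times(\f_k-\f_j)$ likewise represents the face opposite $\f_i$, so after matching orientations I expect $a_{ij} = 4\,\mathbf{A}_i^\top\mathbf{A}_j$ for $i\neq j$. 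The closed-polyhedron identity $\sum_m\mathbf{A}_m=\0$ combined with the row-sum rule $a_{ii}=-\sum_{j\neq i}a_{ij}$ then produces $a_{ii}=4\|\mathbf{A}_i\|^2$, and therefore
\[
[a_{ij}] = 4\,N^\top N, \qquad N=[\,\mathbf{A}_1\ \mathbf{A}_2\ \mathbf{A}_3\ \mathbf{A}_4\,]\in\R^{3\times 4}.
\]

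The second ingredient is the divergence-theorem identity
\[
N\,\f_\tau \;=\; \sum_{m=1}^{4}\mathbf{A}_m\,\f_m^\top \;=\; -3\,|f(\tau)|\,I_3,
\]
which I would establish as follows: applying the divergence theorem on $f(\tau)$ to each coordinate field $\mathbf{x}\mapsto x_s\,\mathbf{e}_t$ gives $\sum_m \mathbf{A}_m\,\bar{\mathbf{x}}_{F_m}^\top = |f(\tau)|\,I_3$, where $\bar{\mathbf{x}}_{F_m}=\tfrac{1}{3}\sum_{j\neq m}\f_j$ is the centroid of the face opposite $\f_m$; substituting this expression for $\bar{\mathbf{x}}_{F_m}$ and using $\sum_m\mathbf{A}_m=\0$ collapses the sum to the claimed formula. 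Combining the two ingredients,
\[
\tr(\f_\tau^\top[a_{ij}]\,\f_\tau) \;=\; 4\,\|N\,\f_\tau\|_F^2 \;=\; 36\,|f(\tau)|^2\,\tr(I_3) \;=\; 108\,|f(\tau)|^2,
\]
so that (with the sign of $L_\tau(f)=\tfrac{1}{36\,\mu(\tau)}[a_{ij}]$ chosen to make it positive semidefinite) $E_\tau = \tfrac{1}{72\,\mu(\tau)}\tr(\f_\tau^\top[a_{ij}]\,\f_\tau) = \tfrac{3|f(\tau)|^2}{2\,\mu(\tau)}$, completing the per-tetrahedron identity.

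The step I expect to require the most care is the factorization $[a_{ij}]=4\,N^\top N$: one must fix outward orientations consistently across the six opposite-edge pairs so that both cross products in every $a_{ij}$ produce compatibly-signed area-vectors, and so that the resulting matrix is positive semidefinite in accordance with $E_\tau\geq 0$. Once that bookkeeping is pinned down, the divergence-theorem lemma is a short standard computation and the trace identity follows immediately.
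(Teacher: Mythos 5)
Your argument is correct, and it takes a genuinely different route from the paper. The paper reduces \eqref{Vfsem} to the same per-tetrahedron identity \eqref{eq:3.6} but then verifies that identity by brute-force symbolic expansion (the symbolic toolbox of MATLAB), whereas you prove it structurally: writing $N=[\mathbf{A}_1\;\mathbf{A}_2\;\mathbf{A}_3\;\mathbf{A}_4]$ for the face area vectors of $f(\tau)$, you factor $[a_{ij}]=4N^\top N$ and combine this with $\sum_{m}\mathbf{A}_m=\0$ and the divergence-theorem identity $N\f_\tau=\sum_m\mathbf{A}_m\f_m^\top=-3|f(\tau)|\,I_3$ to obtain $\mathrm{trace}(\f_\tau^\top[a_{ij}]\f_\tau)=4\|N\f_\tau\|_F^2=108\,|f(\tau)|^2$. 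The orientation bookkeeping you were worried about does close: in \eqref{eq:8_2} the index pattern of the second cross product is the reversal of that of the first, an even permutation of four symbols, so the two factors are simultaneously inward or simultaneously outward area vectors, giving $a_{ij}=4\mathbf{A}_i^\top\mathbf{A}_j$ in all six cases (and the final identity is insensitive to the global inward/outward choice, since flipping every $\mathbf{A}_m$ leaves $N^\top N$ unchanged). Your route buys a human-checkable proof plus structure that the paper's computation hides: it exhibits the elementwise matrix as $\frac{1}{9\mu(\tau)}N^\top N$, positive semidefinite of rank at most three, which explains the nonnegativity of $E_V$ and why the energy depends on $f$ only through the image volumes. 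One point you were right to flag explicitly: your identification of $a_{ij}$ shows that the quadratic form built from \eqref{eq:8_1} with the minus sign as literally written would equal the negative of \eqref{Vfsem}; the positive-semidefinite convention you adopt is the one under which Theorem \ref{thm:E_V} and Remark \ref{rem:OptVal_Ev} hold, so the discrepancy reflects a sign inconsistency between the cross-product line of \eqref{eq:3-3} and the dihedral-angle lines under the outward-normal convention, not a gap in your argument.
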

\begin{proof}
Note that the image volume $|f(\tau)|$ can be written as
\begin{align}
|f(\tau)| = \frac{1}{6} \left( ((\f_2-\f_1)\times(\f_3-\f_1))^\top(\f_4-\f_1) \right). \label{eq:vol_f}
\end{align}
Together with the formula of $L_\tau(f)$ in \eqref{eq:8_1}, \eqref{fundId} and \eqref{eq:vol_f}, we can show that the following equation holds 
\begin{align}\label{eq:3.6}
\frac{1}{2} \sum_{s=1}^3 {\f_\tau^s}^\top L_\tau(f) \f_\tau^s =\frac{3 |f(\tau)|^2}{2\mu(\tau)}
\end{align}
by a direct expansion of both sides of \eqref{eq:3.6} with the symbolic toolbox of MATLAB.

Summing over all tetrahedra in $\mathbb{T}(\M)$, we obtain
\begin{align*}
E_V(f) &= \frac{1}{2} \sum_{s=1}^3 \sum_{\tau\in\mathbb{T}(\M)} {\f_\tau^s}^\top L_\tau(f) \f_\tau^s 
=\sum_{\tau\in\mathbb{T}(\mathcal{M})}\frac{3 |f(\tau)|^2}{2\mu(\tau)}.
\end{align*}
\end{proof}

Theorem~\ref{thm:E_V} indicates that the volumetric stretch energy $E_V$ can be represented solely by $\mu(\tau)$ and the image volume $|f(\tau)|$, where $\tau\in\mathbb{T}(\M)$. 
In the following theorem, we further prove that the minimizer of the volumetric stretch energy is volume-/mass- parameterization, and vice versa.

\begin{theorem} \label{thm:f_argmin}
Let $\mathcal{M}\subset\mathbb{R}^3$ be a simply connected 3-manifold with a genus-zero boundary. 
Under the constraint that the image volume is the same as the volume/mass of $\M$, the map $f^*$ is a minimizer of the volumetric stretch energy functional if and only if $f^*$ is volume-/mass-preserving, i.e.,
\begin{align}
f^*=\mathop{\argmin}_{|f(\M)|=\mu(\M)}E_V(f)\quad \Longleftrightarrow\quad \mu(\tau)=|f^*(\tau)|, \quad\text{for every $\tau\in\mathbb{T}(\mathcal{M})$}. \label{eq:equ_cond}
\end{align}
\end{theorem}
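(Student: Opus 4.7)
The plan is to combine the geometric reformulation provided by \Cref{thm:E_V} with a Cauchy--Schwarz inequality on the positive quantities $|f(\tau)|$ and $\mu(\tau)$. By \Cref{thm:E_V}, the energy depends on $f$ only through the per-tetrahedron image volumes, so the constrained minimization reduces to a finite-dimensional problem in the scalars $\{|f(\tau)|\}_{\tau\in\mathbb{T}(\mathcal{M})}$ subject to $\sum_{\tau}|f(\tau)|=\mu(\mathcal{M})$, which is a much cleaner object than the original functional.

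First I would rewrite $E_V(f)=\sum_{\tau}\frac{3|f(\tau)|^2}{2\mu(\tau)}$ using \Cref{thm:E_V}. Then I would apply the Cauchy--Schwarz inequality in the form
\begin{align*}
\left(\sum_{\tau\in\mathbb{T}(\mathcal{M})}|f(\tau)|\right)^{2}
=\left(\sum_{\tau\in\mathbb{T}(\mathcal{M})}\frac{|f(\tau)|}{\sqrt{\mu(\tau)}}\sqrt{\mu(\tau)}\right)^{2}
\leq \left(\sum_{\tau\in\mathbb{T}(\mathcal{M})}\frac{|f(\tau)|^{2}}{\mu(\tau)}\right)\left(\sum_{\tau\in\mathbb{T}(\mathcal{M})}\mu(\tau)\right).
\end{align*}
Invoking the volume constraint $|f(\mathcal{M})|=\sum_{\tau}|f(\tau)|=\mu(\mathcal{M})=\sum_{\tau}\mu(\tau)$, this yields the uniform lower bound $E_V(f)\geq \tfrac{3}{2}\mu(\mathcal{M})$, independent of $f$.

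For the implication ($\Leftarrow$), if $|f^*(\tau)|=\mu(\tau)$ for every $\tau$, direct substitution gives $E_V(f^*)=\tfrac{3}{2}\mu(\mathcal{M})$, which matches the lower bound, so $f^*$ is a minimizer. For ($\Rightarrow$), I would use the equality case of Cauchy--Schwarz: equality forces $|f^*(\tau)|/\sqrt{\mu(\tau)}=c\sqrt{\mu(\tau)}$ for a common constant $c$, i.e.\ $|f^*(\tau)|=c\,\mu(\tau)$; summing over $\tau$ and using the constraint $|f^*(\mathcal{M})|=\mu(\mathcal{M})$ pins down $c=1$, which is precisely the volume-/mass-preserving condition $|f^*(\tau)|=\mu(\tau)$ for every $\tau\in\mathbb{T}(\mathcal{M})$.

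The main obstacle I anticipate is not the inequality itself but its legitimacy: $|f(\tau)|$ must be interpreted as a nonnegative (unsigned) volume for Cauchy--Schwarz to be sharp in the required direction, and one must ensure that orientation-preservation (i.e.\ $|f(\tau)|>0$ for each $\tau$) is built into the admissible class of maps; otherwise signed cancellations could satisfy $\sum_\tau|f(\tau)|=\mu(\mathcal{M})$ while not being per-tetrahedron volume-preserving. I would therefore make explicit at the outset that $f$ ranges over orientation-preserving piecewise affine maps, so that \eqref{eq:vol_f} is positive on each $\tau$, after which the Cauchy--Schwarz argument above gives both directions cleanly.
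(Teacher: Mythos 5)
Your proposal is correct and rests on the same key reduction as the paper---\Cref{thm:E_V} turns the constrained minimization into a finite-dimensional problem in the scalars $w_r=|f(\tau_r)|$ with $\sum_r w_r=\mu(\mathcal{M})$---but you resolve that finite-dimensional problem differently. The paper writes down the KKT conditions of $\min\frac{3}{2}\sum_r w_r^2/u_r$ subject to $\sum_r w_r=1$, solves for the multiplier ($\lambda=-3$), obtains $w_r=u_r$, and appeals to convexity of the objective and the affine constraint to conclude the KKT point is the global minimizer. You instead prove the sharp lower bound $E_V(f)\geq\frac{3}{2}\mu(\mathcal{M})$ by Cauchy--Schwarz and use its equality case to characterize minimizers; this is more elementary (no Lagrange multipliers, no separate convexity argument), it delivers the optimal value $\frac{3}{2}\mu(\mathcal{M})$ explicitly (recovering the $2\pi$ of \Cref{rem:OptVal_Ev} for free), and the equality case gives both directions of \eqref{eq:equ_cond} in one stroke. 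One caveat applies equally to both arguments: the ``$\Rightarrow$'' direction concludes that a minimizer attains the bound, which tacitly assumes the tuple $(w_r)=(u_r)$ is actually realizable by an admissible map (equivalently, that a volume-/mass-preserving $f$ exists in the class being minimized over); the paper's KKT argument makes the same implicit assumption by optimizing over the full simplex rather than the achievable set, so this is not a gap relative to the paper's own standard. Your remark about restricting to orientation-preserving maps so that each $|f(\tau)|>0$ is a sensible hygiene condition that the paper also imposes implicitly ($w_r\in(0,1)$).
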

\begin{proof}
Let $q:=\#\left(\mathbb{T}(\mathcal{M})\right)$. 
Without loss of generality, we normalize the total volume/mass to one, i.e.,  
\begin{align}
1=\mu(\mathcal{M})=\sum_{r=1}^q\mu(\tau_r),\quad 1=|f(\mathcal{M})|=\sum_{r=1}^q|f(\tau_r)|. \label{eq:massEQ1}
\end{align}
We denote ${w}_r=|f(\tau_r)|\in(0,1)$ and ${u}_r=\mu(\tau_r)\in(0,1)$, for $1\leq r\leq q$.
Then, by \eqref{Vfsem}, the optimal problem 
\begin{align*}
    \min_{|f(\M)|=\mu(\M) = 1}  E_V(f)  
\end{align*}
can be rewritten as
\begin{subequations} \label{eq:opt_Eng_fun}
\begin{align}
    \min \hspace{5mm} & \ E_V({w}_1, {w}_2,\cdots, {w}_q) = \frac{3}{2}\sum_{r=1}^q\frac{{w}_r^2}{{u}_r} \label{eq:opt_Eng_fun_cost} \\
    \mbox{subject to } & \ \sum_{r=1}^q{w}_r=1. \label{eq:opt_Eng_fun_st}
\end{align}
\end{subequations}
The Karush--Kuhn--Tucker (KKT) conditions of \eqref{eq:opt_Eng_fun} imply that
\begin{subequations} \label{eq:KKT}
\begin{align}
    \frac{3w_r}{u_r} + \lambda &= 0, \quad r = 1, \ldots, q, \label{eq:KKT_1}\\
    \sum_{r=1}^q w_r - 1 &= 0,
\end{align}
\end{subequations}
where $\lambda$ is a Lagrange multiplier.
Using the results in \eqref{eq:massEQ1} and \eqref{eq:KKT_1}, we have $\lambda = -3$.
Substituting $\lambda=-3$ into \eqref{eq:KKT_1}, we obtain ${w}_r={u}_r$, $1\leq r\leq q$. Furthermore, the energy function $E_V({w}_1, {w}_2,\cdots, {w}_q)$ in \eqref{eq:opt_Eng_fun_cost} is convex and the associated constraint in \eqref{eq:opt_Eng_fun_st} is a convex set, which implies that the solution of KKT conditions in \eqref{eq:KKT} is the minimizer of the energy functional.
\end{proof}

\begin{remark} \label{rem:OptVal_Ev}
According to Theorems~\ref{thm:E_V} and \ref{thm:f_argmin}, if $f^{*} \in \mathbb{B}^3$ is the minimizer in Theorem~\ref{thm:f_argmin} with $| f^{*}(\mathcal{M}) | = \mu(\mathcal{M}) = \frac{4}{3} \pi$, then
\begin{align*}
    E_V(f^{*}) = \frac{3}{2} \sum_{\tau\in\mathbb{T}(\mathcal{M})}\frac{ |f^{*}(\tau)|^2}{\mu(\tau)} = \frac{3}{2} \sum_{\tau\in\mathbb{T}(\mathcal{M})} |f^{*}(\tau)| = \frac{3}{2} | f^{*}(\mathcal{M}) | = 2 \pi.
\end{align*}
In Table~\ref{tab:VSEM}, we will check the volumetric stretch energy $E_V(f^{*})$, computed by Algorithm~\ref{alg:VSEM}, for various benchmark examples is close to $2\pi$. 
\end{remark}

With the conclusion of \Cref{thm:f_argmin}, volume-/mass-preserving parameterizations can be computed by minimizing $E_V$ in \eqref{eq:equ_cond}. In the following subsection, we provide a neat gradient formula of $E_V$ so that the computation of minimizers in \Cref{thm:f_argmin} can be conveniently carried out.

\subsection{Gradient of the volumetric stretch energy}
\label{subsec:3.2}

For the computation of the gradient 
\begin{align}
\label{eq:5}
\nabla_{\mathbf{f}}E_V(f):=
\begin{bmatrix}
\nabla_{\mathbf{f}^1}E_V(f)\\
\nabla_{\mathbf{f}^2}E_V(f)\\
\nabla_{\mathbf{f}^3}E_V(f)
\end{bmatrix},
\end{align}
of \eqref{eq:1}, from \eqref{eq:2}, we write $E_V(f)$ as
\begin{align*}
E_V(f)=\frac{1}{2}\mathrm{trace}\left(\sum_{\tau \in \mathbb{T}(\mathcal{M})}{\mathbf{f}_{\tau}}^{\top} L_{\tau}(f)\mathbf{f}_{\tau} \right),
\end{align*}
where $L_{\tau}(f)$ is defined in \eqref{eq:8}.
For the $t$-th vertex, $1\leq t\leq n$,  in \eqref{eq:5}, the gradient of $E_V(f)$ on $\mathbf{f}^s$, $s=1, 2, 3$, is computed by
\begin{align}
[\nabla_{\mathbf{f}^s}E_V(f)]_t 
&=[L_V(f)]_{t,1:n} \f^{s} + 
\frac{1}{2}\mathrm{trace}\left(\sum_{\tau \in \mathbb{T}(\mathcal{M})} \mathbf{f}_{\tau}^{\top}\frac{\partial L_{\tau}(f)}{\partial f_t^s}\mathbf{f}_{\tau} \right).\label{eq:6}
\end{align}

To determine the gradient $\nabla_{\mathbf{f}^s}E_V(f)$, we first expand and simplify the second term in \eqref{eq:6}.
For $s\in\{1, 2, 3\}$ and $t\in \{1, 2, 3, 4\}$, the partial derivative of the volumetric stretch Laplacian $L_{\tau}(f)$ in \eqref{eq:8_1} is given by
\begin{align}
\label{eq:10}
\frac{\partial}{\partial f_t^s}L_{\tau}(f)=-\frac{1}{36\mu(\tau)}
\begin{bmatrix}
\frac{\partial a_{11}}{\partial f_t^s}&\frac{\partial a_{12}}{\partial f_t^s}&\frac{\partial a_{13}}{\partial f_t^s}&\frac{\partial a_{14}}{\partial f_t^s}\\
\frac{\partial a_{21}}{\partial f_t^s}&\frac{\partial a_{22}}{\partial f_t^s}&\frac{\partial a_{23}}{\partial f_t^s}&\frac{\partial a_{24}}{\partial f_t^s}\\
\frac{\partial a_{31}}{\partial f_t^s}&\frac{\partial a_{32}}{\partial f_t^s}&\frac{\partial a_{33}}{\partial f_t^s}&\frac{\partial a_{34}}{\partial f_t^s}\\
\frac{\partial a_{41}}{\partial f_t^s}&\frac{\partial a_{42}}{\partial f_t^s}&\frac{\partial a_{43}}{\partial f_t^s}&\frac{\partial a_{44}}{\partial f_t^s}
\end{bmatrix}.
\end{align}
From \eqref{fundId},
for $[\bv_i, \bv_j]\cap[\bv_k, \bv_{\ell}]=\emptyset$ and $i, j, k, \ell\in \{1, 2, 3, 4\}$, we have
\begin{subequations}\label{eq:11}
\begin{align}
\frac{\partial a_{ij}}{\partial \f_{i}}&=(\f_{k}-\f_{j})[(\f_{k}-\f_{\ell})^\top(\f_{j}-\f_{\ell})]+(\f_{\ell}-\f_{j})[(\f_{\ell}-\f_{k})^\top(\f_{j}-\f_{k})], \\
\frac{\partial a_{ij}}{\partial \f_{k}}
&=(\f_{i}+\f_{j}-2\f_{k})[(\f_{i}-\f_{\ell})^\top(\f_{j}-\f_{\ell})]\\
&-(\f_{j}-\f_{\ell})[(\f_{i}-\f_{\ell})^\top(\f_{j}-\f_{k})]-(\f_{i}-\f_{\ell})[(\f_{i}-\f_{k})^\top(\f_{j}-\f_{\ell})]\nonumber.
\end{align}
\end{subequations} 
By using $a_{ij} = a_{ji}$, \eqref{eq:vol_f}, \eqref{eq:10} and \eqref{eq:11}, a direct computation  with the symbolic toolbox of MATLAB yields 
\begin{align}
{\mathbf{f}^r_{\tau}}^{\top}\frac{\partial}{\partial f_t^s}L_{\tau}(f)\mathbf{f}^r_{\tau}=
\begin{cases}
0, & \text{if $r=s$},\\
l(s, t), & \text{if $r\neq s$},
\end{cases} \label{eq:fTpartf_b}
\end{align}
where 
\begin{align*}
l(s, i) &=
12\left(f_j^{s-1}(f_k^{s+1}-f_\ell^{s+1})+f_k^{s-1}f_\ell^{s+1}-f_k^{s+1}f_\ell^{s-1}-f_j^{s+1}(f_k^{s-1}-f_\ell^{s-1})\right) |f(\tau)|,  \\ 
l(s, j) &=
-12\left(f_i^{s-1}(f_k^{s+1}-f_\ell^{s+1})+f_k^{s-1}f_\ell^{s+1}-f_k^{s+1}f_\ell^{s-1}-f_i^{s+1}(f_k^{s-1}-f_\ell^{s-1})\right) |f(\tau)|. 
\end{align*}

The following theorem gives a simple formula for the gradient of $E_V$, which provides a foundation for the VSEM to compute critical points of $E_V$.

\begin{theorem} \label{thm:GradEv}
The gradient of $E_V$ in \eqref{eq:1} can be formulated as
\begin{align*}
\nabla_\f E_V(f) \equiv
\begin{bmatrix}
\nabla_{\f^1} E_V(f) \\
\nabla_{\f^2} E_V(f) \\
\nabla_{\f^3} E_V(f) 
\end{bmatrix}
= 3\begin{bmatrix}
L_V(f) \f^1 \\
L_V(f) \f^2 \\
L_V(f) \f^3 
\end{bmatrix}.
\end{align*}
\end{theorem}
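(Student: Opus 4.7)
The plan is to combine the decomposition \eqref{eq:6} of $[\nabla_{\f^s}E_V(f)]_t$ with the explicit evaluation \eqref{eq:fTpartf_b}, thereby reducing \Cref{thm:GradEv} to a single-tetrahedron polynomial identity.

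First, \eqref{eq:6} splits $[\nabla_{\f^s}E_V(f)]_t$ into the naive Laplacian contribution $[L_V(f)]_{t,1:n}\f^s$ and a correction term $\tfrac{1}{2}\,\mathrm{trace}\bigl(\sum_\tau\f_\tau^\top(\partial L_\tau(f)/\partial f_t^s)\f_\tau\bigr)$ arising from the dependence of $L_V(f)$ on $f$. By \eqref{eq:fTpartf_b}, the $r=s$ summand in the trace vanishes while the two $r\neq s$ summands each equal $l(s,t)$, so the correction collapses to $\sum_{\tau\ni v_t}l(s,t)$. Hence \Cref{thm:GradEv} reduces to the per-tetrahedron identity $l(s,t)=2\,[L_\tau(f)\f_\tau^s]_t$ for every tetrahedron $\tau \ni v_t$.

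To verify this identity I plan to rewrite both sides in a common geometric form. Regrouping the terms in $l(s,t)$ exhibits it as $\pm 12\,|f(\tau)|$ times the $s$-th coordinate of the cross product $(\f_k-\f_j)\times(\f_\ell-\f_j)$, which (up to sign) is the oriented area vector of the face of $f(\tau)$ opposite to $v_t$. On the other hand, substituting the fundamental identity \eqref{fundId} into the definition \eqref{eq:8} of $L_\tau(f)$ and using the volume formula \eqref{eq:vol_f} expresses $[L_\tau(f)\f_\tau^s]_t$ in terms of that same face-area-vector component, divided by $\mu(\tau)$. Matching prefactors then yields the required factor of two. As in the derivation of \eqref{eq:fTpartf_b}, the cleanest way to discharge the algebra is via the MATLAB symbolic toolbox.

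The main obstacle is precisely this identity $l(s,t)=2\,[L_\tau(f)\f_\tau^s]_t$: both sides are multivariate polynomials of moderately high degree in the twelve coordinates of $\{\f_i,\f_j,\f_k,\f_\ell\}$, and the cancellations producing the clean factor of two are not transparent from the defining formulas. A conceptually cleaner alternative, bypassing the $l(s,t)$ machinery altogether, is to differentiate the simplified expression $E_V(f)=\sum_\tau 3|f(\tau)|^2/(2\mu(\tau))$ furnished by \Cref{thm:E_V}: writing $|f(\tau)|^2=V_\tau^2$ for the signed image volume $V_\tau$ and using $\partial V_\tau/\partial\f_t=-\tfrac{1}{6}(\f_k-\f_j)\times(\f_\ell-\f_j)$, one obtains $[\nabla_\f E_V(f)]_t$ as a sum of weighted face-area vectors that can then be matched to $3[L_V(f)\f^s]_t$ tetrahedron-by-tetrahedron, yielding the claim.
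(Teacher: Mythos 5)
Your main argument is essentially the paper's own proof: it uses the decomposition \eqref{eq:6}, collapses the trace correction via \eqref{eq:fTpartf_b} to $\sum_\tau l(s,t)$, and reduces everything to the per-tetrahedron identity $l(s,t)=2\,[L_\tau(f)\f_\tau^s]_t$, which the paper likewise verifies with the MATLAB symbolic toolbox before summing over $\tau\in\mathbb{T}(\mathcal{M})$. Your closing alternative—differentiating the simplified energy $\sum_\tau 3|f(\tau)|^2/(2\mu(\tau))$ from \Cref{thm:E_V} and matching weighted face-area vectors—is a sound and arguably more transparent variant, but as presented it is only a sketch, and your primary route coincides with the paper's.
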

\begin{proof} 
It can be verified by the symbolic toolbox of MATLAB that 
$$
[L_\tau(f) \f_\tau^s]_t = \frac{1}{2} l(s,t), \quad\text{for $t\in\{1,2,3,4\}$},
$$
where $L_{\tau}(f)$ is given in \eqref{eq:8_1}.
As a result, \eqref{eq:fTpartf_b} indicates that
$$
\frac{1}{2} \sum_{r=1}^3 {\f_\tau^r}^\top \frac{\partial}{\partial f_t^s} [L_\tau(f)] \f_\tau^r 
= l(s,t)
= 2 [L_\tau(f) \f_\tau^s]_t.
$$
By the summation over all the tetrahedra $\tau\in\mathbb{T}(\mathcal{M})$, we have
\begin{equation} \label{eq:GradEv0}
\frac{1}{2}\mathrm{trace}\left(\sum_{\tau \in \mathbb{T}(\mathcal{M})} \mathbf{f}_{\tau}^{\top}\frac{\partial L_{\tau}(f)}{\partial f_t^s}\mathbf{f}_{\tau} \right) = 2 [L_V(f) \f^s]_t.
\end{equation}
Thus, by substituting \eqref{eq:GradEv0} into \eqref{eq:6}, we obtain
\begin{align*}
[\nabla_{\f^s} E_V(f)]_t = 3 [L_V(f) \f^s]_t.
\end{align*}
\end{proof}

\subsection{Minimizer of the volumetric stretch energy with fixed boundary points} \label{subsec:min_VSEM_fb}
From Theorem~\ref{thm:GradEv}, the Euler--Lagrange equations corresponding to $E_V(f)$ in \eqref{eq:1} are
\begin{align*}
L_{V}(f)\mathbf{f}^s = \0,\quad s=1, 2, 3,
\end{align*}
which has only a trivial solution $[\mathbf{f}^1, \mathbf{f}^2, \mathbf{f}^3] = [\1, \1, \1]$ because of $L_V(f)$ being a singular $M$-matrix. Such trivial solution does not satisfy the condition $| f(\mathcal{M}) | = \mu(\mathcal{M}) $ in Theorem~\ref{thm:f_argmin}. To get the minimizer $f^*$ in Theorem~\ref{thm:f_argmin}, we consider the optimal problem for the minimal volumetric stretch energy with fixed spherical boundary points so that $| f(\mathcal{M}) | = \frac{4}{3} \pi$ as stated in Remark~\ref{rem:OptVal_Ev}.

Let
\begin{align*}
\mathtt{B} = \{t \,|\, \bv_t\in\partial\mathcal{M}\}\quad\text{and}\quad \mathtt{I} = \{1, \ldots, n\} \backslash\mathtt{B}.
\end{align*}
Under a given spherical boundary constraint $\f^{*s}_{\B}$, we consider the optimal problem:
\begin{align}\label{eq:NLP_VSEM}
    \min\{E_V(f)\rvert\  \f^s_\B = \f^{*s}_{\B}, s=1, 2, 3\}.
\end{align}
The associated Lagrange function of \eqref{eq:NLP_VSEM} is
\begin{align*}
    g(\f_{\I}, \f_{\B}, \blam_{\B}) = E_V(f) + \sum_{s=1}^3\blam_{\B}^{s\top} (\f^s_{\B} - \f^{*s}_{\B}),
\end{align*}
where $\blam_{\B}=[\blam_{\B}^1, \blam_{\B}^2, \blam_{\B}^3]$.
By the KKT conditions of \eqref{eq:NLP_VSEM} and Theorem~\ref{thm:GradEv}, we have
\begin{subequations}
\begin{align}
    0 &= \nabla_{\f^s_{\I}} g(\f_{\I}, \f_{\B}, \blam_{\B}) = L_{\I,\I}(f) \f^s_{\I} + L_{\I,\B}(f) \f^s_{\B},\label{eq:3.18a}\\
    0&= \nabla_{\f^s_{\B}} g(\f_{\I}, \f_{\B}, \blam_{\B}) = 3L_{\B,\I}(f) \f^s_{\I} + 3L_{\B,\B}(f) \f^s_{\B} + \blam^s_{\B},\\
    0&= \nabla_{\blam^s_{\B}} g(\f_{\I}, \f_{\B}, \blam_{\B}) = \f^s_{\B} - \f^{*s}_{\B}.
\end{align}
\end{subequations}
If $\f^{*s}_{\I}$ solves the equation \eqref{eq:3.18a} with $\f^s_{\B}=\f^{*s}_{\B}$, for $s=1, 2, 3$, then by Theorem~\ref{thm:f_argmin}, $f^{*}$ induced by $\f^{*} \equiv [\f^{*\top}_{\I}, \f^{*\top}_{\B}]^{\top}$ is a volume-/mass-preserving map.

\section{Convergence of the volumetric stretch energy minimization}
\label{sec:4}

\begin{algorithm}
\caption{VSEM for the volume-preserving parameterization} \label{alg:VSEM}
\begin{algorithmic}[1]
\Require A simply connected tetrahedral mesh $\mathcal{M}$ and a tolerance $\varepsilon$.
\Ensure A volume-preserving parameterization $f:\mathcal{M}\to\mathbb{B}^3$ induced by $\mathbf{f}$.
\State Let $n$ be the number of vertices of $\mathcal{M}$.
\State Let $\mathtt{B} = \{t \,|\, v_t\in\partial\mathcal{M}\}$ and $\mathtt{I} = \{1, \ldots, n\} \backslash\mathtt{B}$.
\State Compute a spherical area-preserving parameterization $\mathbf{f}_\mathtt{B}$ by the SEM algorithm in \cite{YuLL19}.
\State Compute $\mathbf{f} = \begin{bmatrix}\mathbf{f}^1 & \mathbf{f}^2 & \mathbf{f}^3\end{bmatrix}$ by solving the linear systems
\[
[L_{{V}}]_{\mathtt{I},\mathtt{I}} \mathbf{f}_\mathtt{I}^{s} = -[L_{{V}}]_{\mathtt{I},\mathtt{B}} \mathbf{f}_\mathtt{B}^{s}, 
\quad s = 1,2,3,
\]
where $L_{{V}}$ is defined by $L_{{V}}(\mathrm{id})$ and $\mathbf{f}^s = \begin{bmatrix}
\mathbf{f}^s_{\mathtt{I}}\\
\mathbf{f}^s_{\mathtt{B}}
\end{bmatrix}$, $s=1,2,3$.
\State Let $\delta\gets \infty$ and $\widehat{\mathbf{f}}\gets\mathbf{f}$ ($\widehat{f}\gets f$, where $\widehat{f}$ is induced by $\widehat{\mathbf{f}}$).
\While{$\delta>\varepsilon$}
\State Update $L_{{V}}(\widehat{f})$, where $L_{{V}}(\widehat{f})$ is defined as \eqref{eq:2} with $w_{i,j}(f)$ in \eqref{eq:3-3}.
\State Update $\mathbf{f}=\begin{bmatrix}\mathbf{f}^1 & \mathbf{f}^2 & \mathbf{f}^3\end{bmatrix}$ by solving the linear systems \label{alg:Lap_LS}
\begin{align*}
[L_{{V}}]_{\mathtt{I},\mathtt{I}} \mathbf{f}_\mathtt{I}^{s} = -[L_{{V}}]_{\mathtt{I},\mathtt{B}} \mathbf{f}_\mathtt{B}^{s}, 
\quad s=1,2,3.
\end{align*}
\State Update $\delta \gets E_{{V}}(\widehat{f})-E_{{V}}(f)$ and $\widehat{\mathbf{f}}\gets \mathbf{f}$ ($\widehat{f}\gets f$).
\EndWhile
\State \Return $\mathbf{f}$.
\end{algorithmic}
\end{algorithm}

The VSEM for the volume-/mass-preserving parameterization with a fixed spherical area-/mass-preserving boundary constraint as stated in Algorithm~\ref{alg:VSEM} was proposed in \cite{YuLL19}. It is a novel and efficient algorithm. However, there is a lack of rigorous theoretical support. In this section, we fill the gaps in theory.

Given Theorem~\ref{thm:f_argmin}, the minimizer $f^{\ast} \in \mathbb{B}^3$ with the constraint $|f^*(\M)|=\mu(\M)$ of the volumetric stretch energy functional in \eqref{eq:energy_fun} is the volume-/mass-preserving map from $\mathcal{M}$ to $\mathbb{B}^3$. 
Under a given spherical boundary constraint $\f^*_\B$, the equation \eqref{eq:3.18a} indicates that $f^{\ast}$ would satisfy
\begin{equation} \label{eq:Lap_LS}
[L_V(f^*)]_{\I,\I} \f^*_\I = -[L_V(f^*)]_{\I,\B} \f^*_\B.
\end{equation}
This result tell us that the map $f^*:\mathcal{M} \to \mathbb{B}^3$, produced by Algorithm~\ref{alg:VSEM}, is volume-/mass-preserving if the algorithm is convergent.

Next, we give a mathematical analysis of the convergence for Algorithm~\ref{alg:VSEM}. 
At the $(m+1)$th step of Algorithm~\ref{alg:VSEM}, we denote
\begin{align*}
L_{\I}^{(m)} = [L_V(f^{(m)})]_{\I, \I}, \quad L_{\B}^{(m)} = [L_V(f^{(m)})]_{\I, \B}.
\end{align*}
From \eqref{eq:Lap_LS} or Step~\ref{alg:Lap_LS} of Algorithm~\ref{alg:VSEM}, we have
\begin{align}
      \f_{\I}^{s(m+1)} = - ( L_{\I}^{(m)})^{-1} L_{\B}^{(m)} \f_{\B}^{*s}, \label{eq:f_Is}
\end{align}
which implies that
\begin{subequations} \label{eq:eps_form}
\begin{align}
\beps^{s(m+1)} 
&\equiv  \f_{\I}^{s(m+1)} - \f_{\I}^{s(m)} = \left( ( L_{\I}^{(m-1)})^{-1} L_{\B}^{(m-1)} - ( L_{\I}^{(m)})^{-1} L_{\B}^{(m)} \right) \f_{\B}^{*s} \nonumber \\
&= \left( ( L_{\I}^{(m-1)} )^{-1} -( L_{\I}^{(m)} )^{-1}  \right) L_{\B}^{(m-1)} \f_{\B}^{*s} - ( L_{\I}^{(m)} )^{-1} \left( L_{\B}^{(m)} - L_{\B}^{(m-1)} \right) \f_{\B}^{*s} \nonumber \\
&= ( L_{\I}^{(m)} )^{-1} \left(  L_{\I}^{(m)}  - L_{\I}^{(m-1)}  \right) ( L_{\I}^{(m-1)} )^{-1} L_{\B}^{(m-1)} \f_{\B}^{*s} - ( L_{\I}^{(m)} )^{-1} \left( L_{\B}^{(m)} - L_{\B}^{(m-1)} \right) \f_{\B}^{*s} \nonumber \\
&= ( L_{\I}^{(m)} )^{-1} \begin{bmatrix} 
L_{\I}^{(m)}  - L_{\I}^{(m-1)}  & L_{\B}^{(m)} - L_{\B}^{(m-1)}
\end{bmatrix} \mathbf{g}^{(m)}_s \nonumber \\
&= \left( L_{\I}^{(m)} \right)^{-1} \left( \begin{bmatrix} 
L_{\I}^{(m)} & L_{\B}^{(m)} 
\end{bmatrix} - \begin{bmatrix} 
L_{\I}^{(m-1)} & L_{\B}^{(m-1)} 
\end{bmatrix} \right) \mathbf{g}^{(m)}_s \label{eq:error_eps_m}
\end{align}
for $s = 1, 2, 3$, where
\begin{align}
\mathbf{g}^{(m)}_s = \begin{bmatrix}
( L_{\I}^{(m-1)} )^{-1}  L_{\B}^{(m-1)} \f_{\B}^{*s} \\ - \f_{\B}^{*s}  
\end{bmatrix}. \label{eq:g_m}  
\end{align}
\end{subequations}
Equation \eqref{eq:error_eps_m} indicates that the components of $\beps^{s(m+1)}$ depend on the elements $w_{ij}(f^{(m)})-w_{ij}(f^{(m-1)})$ for $i, j = 1, \ldots, n$. Based on the result in \eqref{eq:3-3}, we derive a new representation of $w_{ij}(f^{(m)})-w_{ij}(f^{(m-1)})$ in the following lemma.
\begin{lemma} \label{lem:w_ij}
Let $\f^{(m-1)}\equiv \f^{(m-1)}_{\I}$ and $\f^{(m)}\equiv \f^{(m)}_{\I}$ be computed by \eqref{eq:f_Is}.
$w_{ij}(f^{(m-1)})$ and $w_{ij}(f^{(m)})$ are the $(i, j)$th element of the Laplacian matrices $L_V(f^{(m-1)})$ and $L_V(f^{(m)})$, respectively, in \eqref{eq:2}. Define
\begin{align}
\beps^{(m)}_t= (\f^{(m)}_t- \f^{(m-1)}_t)^{\top}, \quad t=1,\ldots, n. \label{eq:def_eps_t}
\end{align}
For $[\bv_i, \bv_j]\in\mathbb{E}(\mathcal{M})$, $\tau\equiv [\bv_i, \bv_j, \bv_k, \bv_\ell]\in\mathbb{T}(\mathcal{M})$, the vectors $\h_{pq}^{(\cdot)}$ are defined by $\h_{pq}^{(\cdot)}\equiv \f_p^{(\cdot)}-\f_q^{(\cdot)}$ for $p, q\in\{i, j, k, \ell\}$. 
Then,
\begin{align*}
\widehat{w}_{ij}^{(m)} &\equiv w_{ij}(f^{(m)})-w_{ij}(f^{(m-1)}) \nonumber \\
&=\sum_{\substack{\tau\in\mathbb{T}(\mathcal{M})\\ [\bv_i,\bv_j]\cup[\bv_k, \bv_{\ell}]\subset\tau\\ [\bv_i,\bv_j]\cap[\bv_k, \bv_{\ell}]=\emptyset}}\left(-\c_i^{(m)}\cdot \beps_i^{(m)}-\c_j^{(m)} \cdot \beps_j^{(m)}+\c_k^{(m)} \cdot \beps_k^{(m)}+\c_{\ell}^{(m)} \cdot \beps_{\ell}^{(m)}\right),
\end{align*}
where
\begin{subequations} \label{eq:c_ijkl}
\begin{align}
\c_i^{(m)}=-\frac{1}{36 \mu(\tau)}
&\left((\h_{{\ell}i}^{(m)}\cdot\h_{kj}^{(m)})\h_{{\ell}j}^{(m)}
+(\h_{ki}^{(m-1)}\cdot\h_{{\ell}j}^{(m-1)})\h_{kj}^{(m)}\right. \nonumber \\
&\quad\left.+(\h_{{\ell}i}^{(m)}\cdot\h_{{\ell}j}^{(m)})\h_{kj}^{(m)}
+(\h_{ki}^{(m-1)}\cdot\h_{kj}^{(m-1)})\h_{{\ell}j}^{(m)}\right),\\
\c_j^{(m)}=-\frac{1}{36 \mu(\tau)}
&\left((\h_{{\ell}i}^{(m)}\cdot\h_{kj}^{(m)})\h_{ki}^{(m-1)}
+(\h_{ki}^{(m-1)}\cdot\h_{{\ell}j}^{(m-1)})\h_{{\ell}i}^{(m-1)}\right. \nonumber \\
&\quad\left.+(\h_{{\ell}i}^{(m)}\cdot\h_{{\ell}j}^{(m)})\h_{ki}^{(m-1)}
+(\h_{ki}^{(m-1)}\cdot\h_{kj}^{(m-1)})\h_{{\ell}i}^{(m-1)}\right), \\
\c_k^{(m)}=-\frac{1}{36 \mu(\tau)}
&\left((\h_{{\ell}i}^{(m)}\cdot\h_{kj}^{(m)})\h_{{\ell}j}^{(m)}
+(\h_{ki}^{(m-1)}\cdot\h_{{\ell}j}^{(m-1)})\h_{{\ell}i}^{(m-1)}\right. \nonumber \\
&\quad\quad\left.+(\h_{{\ell}i}^{(m)}\cdot\h_{{\ell}j}^{(m)})(\h_{kj}^{(m)}+\h_{ki}^{(m-1)}\right), \\
\c_{\ell}^{(m)}=-\frac{1}{36 \mu(\tau)}
&\left((\h_{{\ell}i}^{(m)}\cdot\h_{kj}^{(m)})\h_{ki}^{(m-1)}
+(\h_{ki}^{(m-1)}\cdot\h_{{\ell}j}^{(m-1)})\h_{kj}^{(m)}\right. \nonumber \\
&\quad\quad\left.+(\h_{ki}^{(m-1)}\cdot\h_{kj}^{(m-1)})(\h_{{\ell}j}^{(m)}+\h_{{\ell}i}^{(m-1)}\right).
\end{align}
\end{subequations}
\end{lemma}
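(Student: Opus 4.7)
The plan is to compute $\widehat w_{ij}^{(m)}$ directly from the defining formula \eqref{eq:3-3}, rearranged through the scalar expansion \eqref{fundId}, and then read off the coefficients of $\beps_t^{(m)}$ for $t\in\{i,j,k,\ell\}$ for each tetrahedron $\tau$ containing the edge $[\bv_i,\bv_j]$ and the opposite edge $[\bv_k,\bv_\ell]$. Since
\[
\widehat w_{ij}^{(m)} = -\frac{1}{36}\sum_{\tau}\frac{a_{ij}^{(m)}-a_{ij}^{(m-1)}}{\mu(\tau)},
\]
the task is reduced to a per-tetrahedron scalar computation of $a_{ij}^{(m)}-a_{ij}^{(m-1)}$. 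Introducing the four bilinear forms $A=\h_{ki}\cdot\h_{\ell j}$, $B=\h_{\ell i}\cdot\h_{kj}$, $C=\h_{ki}\cdot\h_{kj}$, $D=\h_{\ell i}\cdot\h_{\ell j}$, the identity \eqref{fundId} reads $a_{ij}=AB-CD$, so everything reduces to expressing $AB$ and $CD$ as differences.

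For each product I would use the standard two-term expansion
\[
P^{(m)}Q^{(m)} - P^{(m-1)}Q^{(m-1)} = (P^{(m)}-P^{(m-1)})\,Q^{(m)} + P^{(m-1)}\,(Q^{(m)}-Q^{(m-1)}),
\]
applied once to $AB$ and once to $CD$. For each bilinear factor, the increment is further expanded via
\[
\h_{pq}^{(m)\top}\h_{rs}^{(m)} - \h_{pq}^{(m-1)\top}\h_{rs}^{(m-1)} = (\beps_p^{(m)}-\beps_q^{(m)})^{\top}\h_{rs}^{(m)} + \h_{pq}^{(m-1)\top}(\beps_r^{(m)}-\beps_s^{(m)}),
\]
using the definition \eqref{eq:def_eps_t} of $\beps_t^{(m)}$ and the telescoping $\h_{pq}^{(m)}-\h_{pq}^{(m-1)}=\beps_p^{(m)}-\beps_q^{(m)}$. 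This turns $a_{ij}^{(m)}-a_{ij}^{(m-1)}$ into a finite $\mathbb{R}$-linear combination of the four vectors $\beps_i^{(m)},\beps_j^{(m)},\beps_k^{(m)},\beps_\ell^{(m)}$, with coefficients that are sums of products of the "old" $\h^{(m-1)}$-vectors and "new" $\h^{(m)}$-vectors of $\tau$.

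Collecting, for each vertex $t\in\{i,j,k,\ell\}$, all contributions in which $\beps_t^{(m)}$ appears, and then dividing by $-36\mu(\tau)$ to account for the prefactor in \eqref{eq:3-3}, produces the four vectors $\c_i^{(m)}, \c_j^{(m)}, \c_k^{(m)}, \c_\ell^{(m)}$ of \eqref{eq:c_ijkl}. The sign pattern $(-\c_i,-\c_j,+\c_k,+\c_\ell)$ arises because in $a_{ij}=AB-CD$ the vertices $i,j$ enter the $\h$-differences with a minus sign (e.g.\ $\h_{ki}=\f_k-\f_i$), while $k,\ell$ enter with a plus sign; the split between $\h^{(m)}$ and $\h^{(m-1)}$ in each $\c_t$ simply records which of the two factors in $AB$ or $CD$ was evaluated at iterate $m$ and which at $m-1$. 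Finally, summing the per-tetrahedron identity over all $\tau\in\mathbb{T}(\mathcal{M})$ containing the edge $[\bv_i,\bv_j]$ yields the stated decomposition of $\widehat w_{ij}^{(m)}$.

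The main obstacle is purely bookkeeping: $a_{ij}=AB-CD$ contributes $2\cdot 2\cdot 2 = 8$ scalar monomials after the two-term expansions, each of which sprouts two $\beps_t^{(m)}$-contributions, for a total of sixteen terms that must be sorted into four groups. There is no conceptual difficulty — no cancellation across tetrahedra, no geometric identity needed beyond the trivial $\h_{pq}^{(m)}-\h_{pq}^{(m-1)}=\beps_p^{(m)}-\beps_q^{(m)}$ — so the whole argument can be verified mechanically (for instance with MATLAB's symbolic toolbox, as the authors do for \eqref{eq:3.6} and \eqref{eq:fTpartf_b}); the substance of the lemma lies in having written the result in the clean per-vertex form that isolates the dependence of $\widehat w_{ij}^{(m)}$ on each $\beps_t^{(m)}$, which will be exploited in the convergence analysis of Algorithm~\ref{alg:VSEM}.
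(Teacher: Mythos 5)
Your proposal is correct and follows essentially the same route as the paper: the paper likewise writes $w_{ij}$ via \eqref{eq:3-3} and \eqref{fundId} as $-\sum_\tau\big[(\h_{ki}\cdot\h_{\ell j})(\h_{\ell i}\cdot\h_{kj})-(\h_{ki}\cdot\h_{kj})(\h_{\ell i}\cdot\h_{\ell j})\big]/(36\mu(\tau))$, applies the same add-and-subtract two-term product-difference expansion to each of the two products per tetrahedron, expands each bilinear increment through $\h_{pq}^{(m)}-\h_{pq}^{(m-1)}=\beps_p^{(m)}-\beps_q^{(m)}$, and then collects the coefficients of $\beps_i^{(m)},\beps_j^{(m)},\beps_k^{(m)},\beps_\ell^{(m)}$ into the vectors of \eqref{eq:c_ijkl}. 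The only cosmetic difference is that the paper carries out this bookkeeping by hand rather than deferring it to a symbolic verification.
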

\begin{proof}
Using the expression in \eqref{eq:3-3} and identity \eqref{fundId}, we have
\begin{align*}
w_{ij}(f^{(m)})
&=-\frac{1}{36}\sum_{\substack{\tau\in\mathbb{T}(\mathcal{M})\\ [\bv_i,\bv_j]\cup[\bv_k, \bv_{\ell}]\subset\tau\\ [\bv_i,\bv_j]\cap[\bv_k, \bv_{\ell}]=\emptyset}}
\frac{(\h_{k i}^{(m)}\times \h_{\ell i}^{(m)} )^{\top} (\h_{\ell j}^{(m)}  \times \h_{k j}^{(m)})}{\mu(\tau)} \nonumber \\
&=- \sum_{\substack{\tau\in\mathbb{T}(\mathcal{M})\\ [\bv_i,\bv_j]\cup[\bv_k, \bv_{\ell}]\subset\tau\\ [\bv_i,\bv_j]\cap[\bv_k, \bv_{\ell}]=\emptyset}}
\frac{(\h_{k i}^{(m)} \cdot \h_{\ell j}^{(m)} ) ( \h_{\ell i}^{(m)} \cdot \h_{k j}^{(m)} ) - (\h_{k i}^{(m)} \cdot \h_{k j}^{(m)} ) ( \h_{\ell i}^{(m)} \cdot \h_{\ell j}^{(m)} )}{36 \mu(\tau)} \label{eq:1st_term_wij}. 
\end{align*}
Hence, we can rewrite the numerator of $w_{ij}(f^{(m)})-w_{ij}(f^{(m-1)})$ as the following two terms:
\begin{align*}
& (\h_{k i}^{(m)} \cdot \h_{\ell j}^{(m)} ) ( \h_{\ell i}^{(m)} \cdot \h_{k j}^{(m)} ) - (\h_{k i}^{(m-1)} \cdot \h_{\ell j}^{(m-1)} ) ( \h_{\ell i}^{(m-1)} \cdot \h_{k j}^{(m-1)} ) \nonumber \\
=&(\h_{k i}^{(m)} \cdot \h_{\ell j}^{(m)} ) ( \h_{\ell i}^{(m)} \cdot \h_{k j}^{(m)} ) - (\h_{k i}^{(m-1)} \cdot \h_{\ell j}^{(m-1)} ) ( \h_{\ell i}^{(m-1)} \cdot \h_{k j}^{(m-1)} ) \\
&-(\h_{k i}^{(m-1)} \cdot \h_{\ell j}^{(m-1)} ) (\h_{\ell i}^{(m)} \cdot \h_{k j}^{(m)} ) + (\h_{k i}^{(m-1)} \cdot \h_{\ell j}^{(m-1)} ) (\h_{\ell i}^{(m)} \cdot \h_{k j}^{(m)} )\\
=&\left\{(\beps^{(m)}_k-\beps^{(m)}_i) \cdot \h_{\ell j}^{(m)} + \h_{k i}^{(m-1)} \cdot (\beps^{(m)}_\ell-\beps^{(m)}_j)\right\}(\h_{\ell i}^{(m)} \cdot \h_{k j}^{(m)})\\
&+\left\{(\beps^{(m)}_\ell-\beps^{(m)}_i) \cdot \h_{k j}^{(m)}+\h_{\ell i}^{(m-1)}\cdot(\beps^{(m)}_k-\beps^{(m)}_j)\right\}(\h_{k i}^{(m-1)} \cdot \h_{\ell j}^{(m-1)}) 
\end{align*}
and
\begin{align*}
& (\h_{k i}^{(m)} \cdot \h_{k j}^{(m)} ) ( \h_{\ell i}^{(m)} \cdot \h_{\ell j}^{(m)} ) - (\h_{k i}^{(m-1)} \cdot \h_{k j}^{(m-1)} ) ( \h_{\ell i}^{(m-1)} \cdot \h_{\ell j}^{(m-1)} ) \nonumber \\ 
=&(\h_{k i}^{(m)} \cdot \h_{k j}^{(m)} ) ( \h_{\ell i}^{(m)} \cdot \h_{\ell j}^{(m)} ) - (\h_{k i}^{(m-1)} \cdot \h_{k j}^{(m-1)} ) ( \h_{\ell i}^{(m-1)} \cdot \h_{\ell j}^{(m-1)} ) \\
&- (\h_{k i}^{(m-1)} \cdot \h_{k j}^{(m-1)} ) ( \h_{\ell i}^{(m)} \cdot \h_{\ell j}^{(m)} ) + (\h_{k i}^{(m-1)} \cdot \h_{k j}^{(m-1)} ) ( \h_{\ell i}^{(m)} \cdot \h_{\ell j}^{(m)} ) \\ 
=&\left\{(\beps^{(m)}_k-\beps^{(m)}_i) \cdot \h_{k j}^{(m)} + \h_{k i}^{(m-1)} \cdot (\beps^{(m)}_k-\beps^{(m)}_j)\right\}( \h_{\ell i}^{(m)} \cdot \h_{\ell j}^{(m)} )\\
&+\left\{(\beps^{(m)}_\ell-\beps^{(m)}_i) \cdot \h_{\ell j}^{(m)} +\h_{\ell i}^{(m-1)} \cdot (\beps^{(m)}_\ell-\beps^{(m)}_j)\right\}( \h_{k i}^{(m-1)} \cdot \h_{k j}^{(m-1)} ). 
\end{align*}
Combining the above results, we obtain
\begin{align*}
\widehat{w}_{ij}^{(m)} = \sum_{\substack{\tau\in\mathbb{T}(\mathcal{M})\\ [\bv_i,\bv_j]\cup[\bv_k, \bv_{\ell}]\subset\tau\\ [\bv_i,\bv_j]\cap[\bv_k, \bv_{\ell}]=\emptyset}}\left(-\c_i^{(m)} \cdot \beps_i^{(m)}-\c_j^{(m)} \cdot \beps_j^{(m)}+\c_k^{(m)} \cdot \beps_k^{(m)}+\c_{\ell}^{(m)} \cdot \beps_{\ell}^{(m)}\right),
\end{align*}
where $\c_i^{(m)}$, $\c_j^{(m)}$, $\c_k^{(m)}$ and $\c_{\ell}^{(m)}$ are defined in \eqref{eq:c_ijkl}.
\end{proof}

By the definition of $L_V(f)$ in \eqref{eq:2} and using the result in Lemma~\ref{lem:w_ij}, we obtain
{\small\begin{align}
& \left( \left( \begin{bmatrix} 
L_{\I}^{(m)} & L_{\B}^{(m)} 
\end{bmatrix} - \begin{bmatrix} 
L_{\I}^{(m-1)} & L_{\B}^{(m-1)} 
\end{bmatrix} \right) \mathbf{g}^{(m)}_s \right)_i  
= \sum_{[\bv_i,\bv_j] \in \mathbb{E}(\M) } \widehat{w}_{ij}^{(m)} (g_{sj}^{(m)} - g_{si}^{(m)}) \nonumber \\ 
=& \sum_{[\bv_i,\bv_j] \in \mathbb{E}(\M) } (g_{sj}^{(m)} - g_{si}^{(m)}) \sum_{\substack{\tau\in\mathbb{T}(\mathcal{M})\\ [\bv_i,\bv_j]\cup[\bv_k, \bv_{\ell}]\subset\tau\\ [\bv_i,\bv_j]\cap[\bv_k, \bv_{\ell}]=\emptyset}} \left(-\c_i^{(m)}\cdot \beps_i^{(m)}-\c_j^{(m)} \cdot \beps_j^{(m)}+\c_k^{(m)} \cdot \beps_k^{(m)}+\c_{\ell}^{(m)} \cdot \beps_{\ell}^{(m)}\right)  \nonumber \\
\equiv & \left( T_s^{(m)} \beps^{(m)} 
\right)_i \label{eq:w_ij_gs}
\end{align}}
for $i = 1, \ldots, n$ and $s = 1, 2 ,3$ with $T_s^{(m)} \in \mathbb{R}^{n \times 3n}$ and 
\begin{align}
\beps^{(m)} =  \begin{bmatrix}
 \beps^{(m)}_1 \\ \vdots \\ \beps^{(m)}_n
\end{bmatrix} \in \mathbb{R}^{3n}. \label{eq:beps_m}
\end{align}
With this critical result, we get the relationship between $\beps^{(m+1)}$ and $\beps^{(m)}$ as the following theorem.

\begin{theorem}
Let $\mathbf{g}^{(m)}_s$, $\beps^{(m)}$, and $T_s^{(m)}$, for $s = 1, 2, 3$, be defined in \eqref{eq:g_m}, \eqref{eq:beps_m}, and \eqref{eq:w_ij_gs}, respectively. 
Let
\begin{align*}
P = \begin{bmatrix}
\mathbf{e}_1 & \mathbf{e}_{n+1} & \mathbf{e}_{2n+1} & \mathbf{e}_{2} & \mathbf{e}_{n+2} & \mathbf{e}_{2n+2} & \cdots \mathbf{e}_{n} & \mathbf{e}_{2n} & \mathbf{e}_{3n}
\end{bmatrix}^{\top}
\end{align*}
be an $3n \times 3n$ permutation matrix.
Then
\begin{align}
\beps^{(m+1)}  = P \left( I_3 \otimes (L_{\I}^{(m)})^{-1} \right) \begin{bmatrix}
T_1^{(m)} \\ T_2^{(m)} \\ T_3^{(m)}
\end{bmatrix} \beps^{(m)} \equiv \mathcal{T}^{(m)} \beps^{(m)}. \label{eq:iter_eps}
\end{align}
\end{theorem}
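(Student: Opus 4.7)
The plan is to convert the scalar-index derivation already completed in \eqref{eq:eps_form}--\eqref{eq:w_ij_gs} into the compact matrix form claimed in the theorem, via two essentially mechanical steps: (a) assembling the three coordinate-wise identities into a single block-matrix equation, and (b) reindexing via the permutation $P$.

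First, I would observe that the chain of equalities in \eqref{eq:error_eps_m}, combined with the key rewrite \eqref{eq:w_ij_gs} coming from Lemma~\ref{lem:w_ij}, yields, for each $s\in\{1,2,3\}$, the coordinate-wise identity
\[
\beps^{s(m+1)} \;=\; (L_{\I}^{(m)})^{-1}\, T_s^{(m)}\, \beps^{(m)}.
\]
Here $(L_{\I}^{(m)})^{-1}$ is understood to act on $n$-vectors whose boundary entries vanish, which is automatic for $T_s^{(m)} \beps^{(m)}$ since the fixed spherical boundary in Algorithm~\ref{alg:VSEM} forces $\beps_t^{(m)} = \mathbf{0}$ for every $t\in\mathtt{B}$. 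This step requires nothing more than packaging the data already produced by Lemma~\ref{lem:w_ij}.

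Second, I would stack these three identities in the coordinate direction, using the standard block-diagonal--Kronecker identification $\diag\bigl((L_{\I}^{(m)})^{-1},(L_{\I}^{(m)})^{-1},(L_{\I}^{(m)})^{-1}\bigr) = I_3 \otimes (L_{\I}^{(m)})^{-1}$, to obtain
\[
\begin{bmatrix} \beps^{1(m+1)} \\ \beps^{2(m+1)} \\ \beps^{3(m+1)} \end{bmatrix}
\;=\; \bigl( I_3 \otimes (L_{\I}^{(m)})^{-1} \bigr)\,
\begin{bmatrix} T_1^{(m)} \\ T_2^{(m)} \\ T_3^{(m)} \end{bmatrix}\, \beps^{(m)}.
\]
The left-hand side is laid out \emph{coordinate-first} (all $s=1$ components for $t=1,\ldots,n$, then $s=2$, then $s=3$), whereas the target vector $\beps^{(m+1)} = [\beps_1^{(m+1)\top},\ldots,\beps_n^{(m+1)\top}]^{\top}$ from \eqref{eq:beps_m} is laid out \emph{vertex-first} (the three coordinates of vertex $1$, then those of vertex $2$, and so on). Reading off the rows of $P$ as $\mathbf{e}_1^\top, \mathbf{e}_{n+1}^\top, \mathbf{e}_{2n+1}^\top, \mathbf{e}_2^\top, \mathbf{e}_{n+2}^\top, \mathbf{e}_{2n+2}^\top, \ldots$ in turn, a direct check shows that left-multiplying the stacked vector by $P$ picks out $(\beps_t^{1(m+1)}, \beps_t^{2(m+1)}, \beps_t^{3(m+1)})$ consecutively for $t = 1,\ldots,n$, which is exactly $\beps^{(m+1)}$. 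Combining the two steps yields the claimed factorization.

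All the substantive algebra has already been done in Lemma~\ref{lem:w_ij} and in \eqref{eq:eps_form}, so the theorem is essentially a reorganization of that work and there is no deep analytic obstacle. The only care I would take is the bookkeeping around the interior/boundary split: $(L_{\I}^{(m)})^{-1}$ naturally acts on $|\mathtt{I}|$-dimensional vectors, while both $P$ and the vectors $\beps^{(m)}, \beps^{(m+1)}$ are presented at the full $n$-vertex scale with implicit zeros at boundary entries, so one must be consistent about where those zeros are inserted before applying the Kronecker product and the permutation.
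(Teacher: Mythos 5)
Your proposal is correct and follows essentially the same route as the paper: the paper likewise notes that $P$ maps the coordinate-stacked vector $[\beps^{1(m+1)\top},\beps^{2(m+1)\top},\beps^{3(m+1)\top}]^{\top}$ to the vertex-ordered $\beps^{(m+1)}$, then combines \eqref{eq:eps_form} and \eqref{eq:w_ij_gs} with the Kronecker factorization $I_3\otimes(L_{\I}^{(m)})^{-1}$. Your extra remark on the interior/boundary bookkeeping (zeros of $\beps_t^{(m)}$ for $t\in\mathtt{B}$) is a reasonable clarification that the paper leaves implicit.
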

\begin{proof}
From the definitions of $\beps^{s(m+1)}$, $\beps^{(m)}_t$ and $\beps^{(m)}$ in \eqref{eq:error_eps_m}, \eqref{eq:def_eps_t}, and \eqref{eq:beps_m}, respectively, we have
\begin{align}
P \begin{bmatrix}
\beps^{1(m+1)} \\ \beps^{2(m+1)} \\ \beps^{3(m+1)}
\end{bmatrix} = \begin{bmatrix}
\beps^{(m+1)}_1 \\ \vdots \\ \beps^{(m+1)}_n
\end{bmatrix} = \beps^{(m+1)}. \label{eq:reorder_eps}
\end{align}
The results in \eqref{eq:eps_form}, \eqref{eq:w_ij_gs}, \eqref{eq:beps_m} and \eqref{eq:reorder_eps} imply that
\begin{align}
\beps^{(m+1)} &= P \begin{bmatrix}
\left( L_{\I}^{(m)} \right)^{-1} \left( \begin{bmatrix} 
L_{\I}^{(m)} & L_{\B}^{(m)} 
\end{bmatrix} - \begin{bmatrix} 
L_{\I}^{(m-1)} & L_{\B}^{(m-1)} 
\end{bmatrix} \right) \mathbf{g}^{(m)}_1 \\
\left( L_{\I}^{(m)} \right)^{-1} \left( \begin{bmatrix} 
L_{\I}^{(m)} & L_{\B}^{(m)} 
\end{bmatrix} - \begin{bmatrix} 
L_{\I}^{(m-1)} & L_{\B}^{(m-1)} 
\end{bmatrix} \right) \mathbf{g}^{(m)}_2 \\
\left( L_{\I}^{(m)} \right)^{-1} \left( \begin{bmatrix} 
L_{\I}^{(m)} & L_{\B}^{(m)} 
\end{bmatrix} - \begin{bmatrix} 
L_{\I}^{(m-1)} & L_{\B}^{(m-1)} 
\end{bmatrix} \right) \mathbf{g}^{(m)}_3
\end{bmatrix} \nonumber \\
&= P \left( I_3 \otimes  ( L_{\I}^{(m)}  )^{-1} \right)
\begin{bmatrix}
T_1^{(m)} \\ T_2^{(m)} \\ T_3^{(m)}
\end{bmatrix} \beps^{(m)}. \label{eq:eps_mp1}
\end{align}
\end{proof}

Like the analysis of convergence for the stretch energy minimization for equiareal parameterizations in \cite{HuLL22}, in the following theorem we show that Algorithm~\ref{alg:VSEM} for the volume-preserving parameterization is R-linearly convergent.

\begin{theorem}
\label{thm3.6}
Let $\mathcal{M}\subset\mathbb{R}^3$ be a simply connected 3-manifold with a single genus-zero boundary. Let $\mathbf{f}_{\mathtt{I}}^{s(m)}$ for $s=1, 2, 3$ be defined in \eqref{eq:f_Is} and $\beps^{(m)}$ with $\beps_t^{(m)}$ being defined in \eqref{eq:reorder_eps} and \eqref{eq:def_eps_t}, respectively. Define
\begin{align}
\mathcal{P}_m = (\mathcal{T}^{(m)} \cdots \mathcal{T}^{(0)}) \label{eq:mtx_Pm}
\end{align}
with $\mathcal{T}^{(i)}$ in \eqref{eq:iter_eps}. Considering Figure \ref{fig:eigs_svds}, we suppose that there exists $m_0 > 0$ and $c > 0$ such that
\begin{align}
\| \mathcal{P}_m \|_2 \leq c, \quad \rho(\mathcal{P}_{m+1}) \leq \rho(\mathcal{P}_m) < 1, \label{eq:rho_P}
\end{align}
and $\mathcal{P}_m$ is diagonalizable for $m \geq m_0$.
Then, there exists a sequence $\{\mathbf{f}_{\mathtt{I}}^{s(m_j)}\}$ for $s=1, 2, 3$ satisfying R-linear convergence, i.e.,
\begin{align*}
\|\beps^{(m_j)} \|_{\infty}^{1/m_j} <1.
\end{align*}
\end{theorem}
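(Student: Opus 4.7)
The natural starting point is to iterate the one-step recursion \eqref{eq:iter_eps} into the closed form
\[
\beps^{(m+1)} \;=\; \mathcal{T}^{(m)}\cdots\mathcal{T}^{(0)}\,\beps^{(0)} \;=\; \mathcal{P}_m\,\beps^{(0)},
\]
reducing the task to controlling $\|\mathcal{P}_m\,\beps^{(0)}\|_\infty^{1/m}$ along a suitable subsequence. The three items in \eqref{eq:rho_P}---a uniform spectral-norm bound, monotone-decreasing spectral radii strictly below one, and diagonalizability for $m\ge m_0$---are exactly what is needed to combine a spectral decomposition, a compactness argument, and a root-extraction.

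For $m\ge m_0$ I would first fix the eigendecomposition $\mathcal{P}_m = V_m\Lambda_m V_m^{-1}$ with columns of $V_m$ normalized to unit Euclidean length, so that $\|V_m\|_2\le\sqrt{3n}$ and $\|\Lambda_m\|_2 = \rho(\mathcal{P}_m)$. Submultiplicativity of $\|\cdot\|_2$ together with the equivalence of $\|\cdot\|_\infty$ and $\|\cdot\|_2$ on $\mathbb{R}^{3n}$ yields
\[
\|\beps^{(m+1)}\|_\infty \;\le\; K\,\kappa_2(V_m)\,\rho(\mathcal{P}_m)\,\|\beps^{(0)}\|_2
\]
for a constant $K$ depending only on $n$. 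Then I would invoke compactness: the bounded sequence $\{\mathcal{P}_m\}\subset\mathbb{R}^{3n\times 3n}$ admits, by Bolzano--Weierstrass, a subsequence $\mathcal{P}_{m_j}\to\mathcal{P}_\infty$, and, after a further refinement, $V_{m_j}\to V_\infty$ and $\Lambda_{m_j}\to\Lambda_\infty$. Diagonalizability combined with the empirical eigenvector/singular-value behavior recorded in Figure~\ref{fig:eigs_svds} is used to certify that $V_\infty$ is invertible, which uniformly bounds $\kappa_2(V_{m_j})\le K'$ along the subsequence, while the monotone hypothesis gives $\rho(\mathcal{P}_{m_j})\le\rho(\mathcal{P}_{m_0})<1$.

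Substituting these uniform bounds into the displayed inequality and taking $(1/m_j)$-th roots,
\[
\|\beps^{(m_j+1)}\|_\infty^{1/m_j} \;\le\; \bigl(K K'\,\|\beps^{(0)}\|_2\,\rho(\mathcal{P}_{m_j})\bigr)^{1/m_j}.
\]
Since the bracketed quantity is uniformly bounded as $m_j\to\infty$ and $\rho(\mathcal{P}_{m_j})\le\rho(\mathcal{P}_{m_0})<1$ is a fixed constant strictly below one, the right-hand side falls strictly below $1$ for all sufficiently large $j$, delivering (after a harmless reindexing of the subsequence) the R-linear bound $\|\beps^{(m_j)}\|_\infty^{1/m_j}<1$.

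The principal obstacle is securing the uniform control on the condition number $\kappa_2(V_m)$. The spectral-norm hypothesis $\|\mathcal{P}_m\|_2\le c$ on its own is compatible with $\kappa_2(V_m)\to\infty$ as $\rho(\mathcal{P}_m)$ shrinks, a scenario in which the estimate $\|\mathcal{P}_m\|_2\le\kappa_2(V_m)\,\rho(\mathcal{P}_m)$ would allow $\|\mathcal{P}_m\|^{1/m}$ to cling to $1$. Resolving this demands that the limiting eigenbasis $V_\infty$ remain non-singular---in effect ruling out coalescence of eigenvalues into a defective Jordan block in the limit---which is precisely the combined content of the diagonalizability assumption and the empirical spectral/singular-value data reported in Figure~\ref{fig:eigs_svds}.
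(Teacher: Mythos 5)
Your reduction to $\beps^{(m_j+1)}=\mathcal{P}_{m_j}\beps^{(0)}$ matches the paper, but your final estimate cannot deliver the conclusion. In your bound $\|\beps^{(m_j+1)}\|_\infty \le K\,\kappa_2(V_{m_j})\,\rho(\mathcal{P}_{m_j})\,\|\beps^{(0)}\|_2$ the spectral radius enters only to the \emph{first} power: $\mathcal{P}_{m_j}$ is a single product matrix, not the $m_j$-th power of a fixed contraction, and the hypothesis \eqref{eq:rho_P} only gives $\rho(\mathcal{P}_{m_j})\le\rho(\mathcal{P}_{m_0})<1$, not decay in $m_j$. After taking $1/m_j$-th roots the right-hand side is $\bigl(K K'\|\beps^{(0)}\|_2\,\rho(\mathcal{P}_{m_j})\bigr)^{1/m_j}$; the base is not controlled by the assumptions (it can easily exceed $1$, since $\kappa_2,K'\ge 1$ and $\|\beps^{(0)}\|_2$ is arbitrary), in which case the bound is $\ge 1$, and even when the base happens to be below $1$ the expression tends to $1$ as $m_j\to\infty$. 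So the assertion that it ``falls strictly below $1$ for all sufficiently large $j$'' is false in general, and in no case do you obtain a bound asymptotically bounded away from $1$. The missing idea is to convert the hypotheses into decay of $\|\mathcal{P}_{m_j}\|$ that is \emph{geometric in $m_j$}. The paper does this by passing to the root matrices $\mathcal{P}_{m_j}^{1/m_j}$, extracting (using diagonalizability and \eqref{eq:rho_P}) a subsequence with $\mathcal{P}_{m_j}^{1/m_j}\to\mathcal{A}$ and $\rho(\mathcal{A})<1$, choosing an operator norm $\|\cdot\|_{\ast}$ with $\|\mathcal{A}\|_{\ast}<1$, and estimating $\|\beps^{(m_j)}\|_\infty^{1/m_j}\le \|\mathcal{A}+\mathcal{E}_{m_j}\|_{\ast}\,M_\infty^{1/m_j}\,\|\beps^{(0)}\|_\infty^{1/m_j}$, whose right-hand side tends to $\|\mathcal{A}\|_{\ast}<1$; the exponential contraction is built in through $\|(\mathcal{A}+\mathcal{E}_{m_j})^{m_j}\|_{\ast}^{1/m_j}\le\|\mathcal{A}+\mathcal{E}_{m_j}\|_{\ast}$.

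A second gap is the uniform bound $\kappa_2(V_{m_j})\le K'$. Diagonalizability of each $\mathcal{P}_m$ together with $\|\mathcal{P}_m\|_2\le c$ does not prevent the normalized eigenvector matrices from converging to a singular limit (eigenvalue coalescence toward a defective structure), and appealing to the empirical curves in Figure~\ref{fig:eigs_svds} is not a mathematical argument---in the theorem the figure only motivates the hypothesis \eqref{eq:rho_P}; it certifies nothing about eigenvector conditioning. You correctly identify this as the principal obstacle, but your proposed resolution does not close it, and even if it were closed, the first issue above would still block the conclusion.
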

\begin{proof}
Since $\mathcal{P}_m$ is diagonalizable, for $m \geq m_0$, from the assumption \eqref{eq:rho_P}, there exists a subsequence $\{m_j\}$ such that
\begin{align*}
\lim_{j \to \infty} \mathcal{P}_{m_j}^{1/m_j} = \mathcal{A}
\end{align*}
with $\rho(\mathcal{A}) < 1$. Here, $\mathcal{P}_{m_j}^{1/m_j}$ is the $m_j$-root of $\mathcal{P}_{m_j}$ for which the eigenvalues are taken as the principal values of the eigenvalues of $\mathcal{P}_{m_j}$.
Let
\begin{align*}
\mathcal{P}_{m_j}^{1/m_j} = \mathcal{A} + \mathcal{E}_{m_j}
\end{align*}
with $\mathcal{E}_{m_j} \to 0$ as $j \to \infty$.
There is an operator norm $\|\cdot\|_{\ast}$ and a constant $M_\infty>0$ such that $\| \mathcal{A} \|_{\ast} < 1$ and $\| \mathcal{A} \|_{\infty} \leq M_{\infty} \| \mathcal{A} \|_{\ast}$. Then, for $\beps^{(m_j)} = \mathcal{P}^{(m_j)} \beps^{(0)} = (\mathcal{T}^{(m_j)} \cdots \mathcal{T}^{(0)}) \beps^{(0)}$, we have
\begin{align*}
\left\|\beps^{(m_j)} \right\|^{1/m_j}_{\infty}
&\leq \left\| \mathcal{P}_{m_j} \right\|^{1/m_j}_{\infty} \| \beps^{(0)} \|^{1/m_j}_{\infty} = \| (\mathcal{A} + E_{m_j})^{m_j} \|^{1/m_j}_{\infty} \| \beps^{(0)} \|^{1/m_j}_{\infty} \\
&\leq \| (\mathcal{A} + E_{m_j})^{m_j} \|^{1/m_j}_{\ast} M_{\infty}^{1/m_j} \| \beps^{(0)} \|^{1/m_j}_{\infty} \\
&\leq \| (\mathcal{A} + E_{m_j})  \|_{\ast} M_{\infty}^{1/m_j} \| \beps^{(0)} \|^{1/m_j}_{\infty} <1
\end{align*}
for $j$ sufficiently large.
\end{proof}

\section{Projected gradient method for VOMT maps}
\label{sec:5}

The volume-/mass-preserving parameterization computed by Algorithm \ref{alg:VSEM} is not unique. For example, given a volume-/mass-preserving parameterization $\f$, consider the rotation $R\in SO(3) = \{R \in \mathbb{R}^{3 \times 3}| R^{\top} R = I_3, \mbox{det}(R) = 1 \}$. The map $\f R^\top$ is also a volume-/mass-preserving parameterization and hence a minimizer of the volumetric stretch energy \eqref{eq:energy_fun}. In practical applications, it is usually desired that the map be unique.
To guarantee the uniqueness of the parameterization, it is natural to impose a constraint to minimize the displacement of each point with the volume being its weight. Such a map is called a VOMT map.
More precisely, the VOMT map minimizes the cost function
\begin{equation*}
\mathcal{C}(f) = \int_\mathcal{M} \| \bv - f(\bv) \|_2^2 \,\nu(\bv)
\end{equation*}
under the constraint that $f:\mathcal{M}\to\mathbb{B}^3$ is volume-/mass-preserving with respect to the volume/mass measure $\nu:\mathcal{M}\to\mathbb{R}_+$.
The VSEM algorithm, Algorithm \ref{alg:VSEM}, can be applied as a projection $\Pi_{\mathscr{F}}:\mathbb{R}^{n\times 3}\to\mathbb{R}^{n\times 3}$ to the space
$$
\mathscr{F} = \left\{ \f R^\top \in\mathbb{R}^{n\times 3} \left| 
\begin{aligned}
&\textrm{ $\f$ represents a volume-/mass-preserving map $f:\mathcal{M}\to\mathbb{B}^3$} \\
&\textrm{ $R = \argmin_{R\in SO(3)} \mathcal{C}(\f R^\top)$ is the optimal rotation with respect to $\mathcal{C}$}
\end{aligned} \right.\right\}
$$
so that the constraint can be conveniently satisfied.

When $f$ is a piecewise affine map, the volume measure is piecewise constant so that the cost function can be formulated as
\begin{equation} \label{eq:cost}
\mathcal{C}(f) = \sum_{\bv\in\mathbb{V}(\mathcal{M})} \| \bv - f(\bv) \|_2^2 \,\nu(\bv),
\end{equation}
where
$$
\nu(\bv) = \frac{1}{4} \sum_{\tau\in N(\bv)} \mu(\tau)
$$
with $N(\bv)$ being the set of neighboring tetrahedra of $\bv$.
Noting that the piecewise affine map $f$ is represented as a matrix $\mathbf{f}$ as in \eqref{eq:f}, the cost function $\mathcal{C}$ in \eqref{eq:cost} can be written as
\begin{align*}
\mathcal{C}(\f) \equiv \mathcal{C}((\mathbf{f}^1)^\top, (\mathbf{f}^2)^\top, (\mathbf{f}^3)^\top) 
= \sum_{t=1}^n \sum_{s=1}^3 (v_t^s - f_t^s)^2 \,\nu(\bv_t).
\end{align*}
The gradient of $\mathcal{C}$ is written as
$$
\nabla\mathcal{C}(\f) = -2 (I_3 \otimes \diag(\nu(\bv_1), \cdots, \nu(\bv_n))) \,
\vec(\bv-\f),
$$
where $\bv = [\bv^1, \bv^2, \bv^3]$ with $\bv^s = [v_1^s, \ldots, v_n^s]^\top$, $s=1,2,3$, and
$
\vec(\f) \equiv [
(\f^1)^{\top},
(\f^2)^{\top},
(\f^3)^{\top}
]^{\top}.
$

The projected gradient method for VOMT maps is performed as follows.
First, we compute a volume-/mass-preserving map ${\f}^{(0)}$ by Algorithm \ref{alg:VSEM} with the boundary being an area-preserving OMT map computed by the AOMT algorithm in \cite{YuHL21}.
Then, the map is updated along the negative gradient direction as
\begin{subequations} \label{eq:grad}
\begin{equation} 
\vec(\overline{\f}^{(m+1)}) = \vec(\f^{(m)}) - \alpha_m \nabla\mathcal{C}(\f^{(m)}),
\end{equation}
where the step size $\alpha_m$ is computed by 
\begin{align}
\alpha_m = \argmin_{\alpha>0} \mathcal{C}(\overline{\f}^{(m+1)}).
\end{align}
\end{subequations}
Next, the map $\overline{\f}^{(m+1)}$ is projected to $\mathscr{F}$ by the VSEM algorithm with the initial Laplacian matrix being $L_V(\overline{\f}^{(m+1)})$ and the boundary map being that of $\f^{(0)}$.
As a result, the updated map is written as
\begin{equation} \label{eq:fk}
\f^{(m+1)} = \Pi_\mathscr{F}(\overline{\f}^{(m+1)}).
\end{equation}
The iteration terminates when $\mathcal{C}(\f^{(m+1)}) \geq \mathcal{C}(\f^{(m)})$.

The details of the computational procedure are summarized in Algorithm \ref{alg:VOMT}.

\begin{algorithm}
\caption{Projected gradient method for the VOMT map} \label{alg:VOMT}
\begin{algorithmic}[1]
\Require A simply connected tetrahedral mesh $\mathcal{M}$ and a tolerance $\varepsilon$.
\Ensure A VOMT map $f:\mathcal{M}\to\mathbb{B}^3$ induced by $\mathbf{f}$.
\State Let $n$ be the number of vertices of $\mathcal{M}$.
\State Let $\mathtt{B} = \{t \,|\, v_t\in\partial\mathcal{M}\}$ and $\mathtt{I} = \{1, \ldots, n\} \backslash\mathtt{B}$.
\State Compute the initial map $\f$ by Algorithm \ref{alg:VSEM}.
\State Let $\delta\gets\infty$ and $\widehat{\mathbf{f}}\gets\mathbf{f}$ ($\widehat{f}\gets f$, where $\widehat{f}$ is induced by $\widehat{\mathbf{f}}$).
\While{$\delta>\varepsilon$}
\State Update $\f$ as in \eqref{eq:grad} and \eqref{eq:fk}.
\State Update $\delta \gets \mathcal{C}(\widehat{f})-\mathcal{C}(f)$ and $\widehat{\mathbf{f}}\gets \mathbf{f}$ ($\widehat{f}\gets f$).
\EndWhile
\State \Return $\mathbf{f}$.
\end{algorithmic}
\end{algorithm}

\subsection{Convergence of the projected gradient method}

Now, we provide a rigorous proof for the convergence of Algorithm~\ref{alg:VOMT}.

\begin{theorem} \label{thm:ProjGrad}
Let $\mathcal{C}$ be convex and $L$-smooth, i.e., $\nabla^2 \mathcal{C} - L I$ is positive semidefinite.
Assume the projection $\Pi_\mathscr{F}$ satisfies the properties of projection
\begin{subequations}
\begin{align} 
\left( \vec( \Pi_\mathscr{F}(\overline{\f}^{(m)}) ) - \vec( \overline{\f}^{(m)} ) \right)^\top \left(  \vec( \Pi_\mathscr{F}(\overline{\f}^{(m)}) ) - \vec( \f^{(m)} ) \right) \leq 0, \label{eq:assumption2} \\
\left( \vec( \Pi_\mathscr{F}(\overline{\f}^{(m)}) ) - \vec( \overline{\f}^{(m)} ) \right)^\top \left(  \vec( \Pi_\mathscr{F}(\overline{\f}^{(m)}) ) - \vec( \f^{*} ) \right) \leq 0, \label{eq:assumption3}
\end{align}
\end{subequations}
and nonexpensiveness
\begin{equation} \label{eq:assumption1}
\left\|\vec( \Pi_\mathscr{F}(\overline{\f}^{(m)}) ) - \vec( \Pi_\mathscr{F}({\f}^{*}) )\right\|_2 \leq \left\|\vec( \overline{\f}^{(m)} ) - \vec( {\f}^{*} )\right\|_2,
\end{equation}
for every $m\geq 1$.
Suppose $\alpha_m \in [\eta,\frac{2}{L}]$ with $\eta>0$ for every $m\geq 1$.
Then,
\begin{align}
\mathcal{C}(\f^{(m)}) - \mathcal{C}(\f^{*}) \leq \frac{4\beta + \mathcal{C}(\f^{(0)}) - \mathcal{C}(\f^{*})}{m+1}, \label{eq:thm5.1}
\end{align}
where $\beta = \frac{\|\vec(\f^{(0)}) - \vec(\f^{*})\|_2^2}{2\eta+\eta^2 L}$ is a constant.
\end{theorem}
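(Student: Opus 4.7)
The plan is to adapt the classical $\mathcal{O}(1/m)$ analysis of projected gradient descent on a convex $L$-smooth objective, replacing the Euclidean projection onto a convex set by the (possibly non-Euclidean) projector $\Pi_{\mathscr{F}}$ and using the abstract properties \eqref{eq:assumption1}--\eqref{eq:assumption3} in its place. Throughout, I abbreviate $v_m=\vec(\f^{(m)})$, $\overline{v}_{m+1}=\vec(\overline{\f}^{(m+1)})=v_m-\alpha_m g_m$ with $g_m=\nabla\mathcal{C}(\f^{(m)})$, $d_m=\|v_m-v^*\|_2^2$, and $h_m=\mathcal{C}(\f^{(m)})-\mathcal{C}(\f^*)$. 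Since $\f^*\in\mathscr{F}$ one has $\Pi_{\mathscr{F}}(\f^*)=\f^*$, so \eqref{eq:assumption1} legitimately compares iterates against $v^*$. The proof proceeds in three stages: establish monotonicity $h_{m+1}\le h_m$; derive a one-step fundamental inequality of the form $(2\alpha_m+\alpha_m^2 L)\,h_{m+1}\le 2(d_m-d_{m+1})$; and telescope.

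For monotonicity, $L$-smoothness gives $h_{m+1}-h_m\le \langle g_m,v_{m+1}-v_m\rangle+\tfrac{L}{2}\|v_{m+1}-v_m\|_2^2$. Rewriting \eqref{eq:assumption2} with $\overline{v}_{m+1}=v_m-\alpha_m g_m$ yields $\langle g_m,v_{m+1}-v_m\rangle\le -\alpha_m^{-1}\|v_{m+1}-v_m\|_2^2$, so substitution produces $h_{m+1}-h_m\le (L/2-1/\alpha_m)\|v_{m+1}-v_m\|_2^2$, which is nonpositive thanks to $\alpha_m\le 2/L$. A rearrangement of the same inequality supplies the useful residual estimate $\|v_{m+1}-v_m\|_2^2\le \tfrac{2\alpha_m}{2-\alpha_m L}(h_m-h_{m+1})$. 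For the fundamental inequality I start from \eqref{eq:assumption3}, substitute $\overline{v}_{m+1}=v_m-\alpha_m g_m$, and apply the polarization identity to get $2\alpha_m\langle g_m,v_{m+1}-v^*\rangle\le d_m-d_{m+1}-\|v_m-v_{m+1}\|_2^2$. Splitting $\langle g_m,v_{m+1}-v^*\rangle = \langle g_m,v_m-v^*\rangle+\langle g_m,v_{m+1}-v_m\rangle$, bounding the first summand below by $h_m$ via convexity and the second below by $h_{m+1}-h_m-\tfrac{L}{2}\|v_{m+1}-v_m\|_2^2$ via $L$-smoothness, and collecting everything, yields $d_{m+1}+2\alpha_m h_{m+1}\le d_m+(\alpha_m L-1)\|v_{m+1}-v_m\|_2^2$. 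Feeding the residual estimate into the right-hand side absorbs the sign-indefinite correction and, after algebra, produces the target recursion $(2\alpha_m+\alpha_m^2 L)\,h_{m+1}\le 2(d_m-d_{m+1})$.

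Summing this recursion over $k=0,\dots,m-1$ and using $\alpha_k\ge\eta$ gives $(2\eta+\eta^2 L)\sum_{k=1}^m h_k\le 2 d_0$; the monotonicity from the first stage converts the left-hand sum into $m(2\eta+\eta^2 L)h_m$, whence $h_m\le 2\beta/m$ for $m\ge1$. The elementary inequality $2\beta/m\le 4\beta/(m+1)$ (valid for $m\ge1$), together with the trivial case $m=0$ in which the $h_0$ term dominates, then delivers the uniform bound $h_m\le (4\beta+h_0)/(m+1)$ as claimed in \eqref{eq:thm5.1}. The main obstacle is the fundamental inequality: whenever $\alpha_m\in(1/L,2/L]$ the stray term $(\alpha_m L-1)\|v_{m+1}-v_m\|_2^2$ is nonnegative and the standard contractive argument breaks down; absorbing it via the residual bound---which itself relies on the interaction between \eqref{eq:assumption2} and $L$-smoothness---is precisely what fixes the coefficient $(2\eta+\eta^2 L)$ hidden inside $\beta$. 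A secondary technical check is the fixed-point identity $\Pi_{\mathscr{F}}(\f^*)=\f^*$, needed so that \eqref{eq:assumption1} and \eqref{eq:assumption3} apply with $v^*$ as the reference point.
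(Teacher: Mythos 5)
Your overall route is genuinely different from the paper's (you telescope squared distances $d_m=\|\vec(\f^{(m)})-\vec(\f^{*})\|_2^2$, while the paper derives the quadratic recursion $\mathcal{C}(\f^{(m)})-\mathcal{C}(\f^{(m+1)})\ge\frac{1}{2\beta}\,\delta^*\mathcal{C}(\f^{(m+1)})^2$ from \eqref{eq:result2}, \eqref{pf:thm5.1_14a}--\eqref{pf:thm5.1_14b} and solves it by induction), but your key step has a genuine gap. The claimed one-step ``fundamental inequality'' $(2\alpha_m+\alpha_m^2L)\,h_{m+1}\le 2(d_m-d_{m+1})$ does not follow from the absorption you describe. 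Feeding the residual estimate $\|v_{m+1}-v_m\|_2^2\le\frac{2\alpha_m}{2-\alpha_mL}(h_m-h_{m+1})$ into $2\alpha_m h_{m+1}\le d_m-d_{m+1}+(\alpha_mL-1)\|v_{m+1}-v_m\|_2^2$ gives $\frac{2\alpha_m}{2-\alpha_mL}h_{m+1}\le d_m-d_{m+1}+\frac{2\alpha_m(\alpha_mL-1)}{2-\alpha_mL}h_m$: the sign-indefinite term is traded for a nonnegative multiple of $h_m$ on the right that cannot be dropped, whose coefficient blows up as $\alpha_m\to 2/L$, and which is unavailable at $\alpha_m=2/L$, where the residual estimate is vacuous. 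Worse, the target recursion is simply false in that regime: take $\Pi_{\mathscr{F}}=\mathrm{id}$ (so \eqref{eq:assumption2}, \eqref{eq:assumption3}, \eqref{eq:assumption1} hold with equality), $\mathcal{C}(x)=\frac{L}{2}x^2$ in one variable and $\alpha_m=2/L$; then $v_{m+1}=-v_m$, so $d_m-d_{m+1}=0$ while $h_{m+1}=h_m>0$ (it already fails for $\alpha_mL\approx1.8$). Hence your argument is complete only for $\alpha_m\le 1/L$, and the constant $\beta=\frac{\|\vec(\f^{(0)})-\vec(\f^{*})\|_2^2}{2\eta+\eta^2L}$ with the full range $[\eta,2/L]$ is exactly what your scheme does not deliver.

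It is worth seeing where this diverges from the paper: the factor $2\alpha_m+\alpha_m^2L$ there comes from the paper's inequality \eqref{eq:Lsmooth}, which carries $+\frac{L}{2}\|\delta\f^{(m)}\|_2^2$ where the standard descent lemma (your reading of $L$-smoothness) yields $-\frac{L}{2}\|\delta\f^{(m)}\|_2^2$; your attempt to recover that factor by combining the standard descent lemma with the residual bound is precisely the step that breaks. Indeed the same one-dimensional example above satisfies the theorem's stated hypotheses (including $\nabla^2\mathcal{C}-LI\succeq0$) yet has $h_m\equiv h_0>0$, so the $\mathcal{O}(1/m)$ bound cannot be proved at the endpoint $\alpha_m=2/L$ from the standard descent lemma alone; to close your argument you must either restrict the step sizes to $\alpha_m\le 1/L$ (or $\alpha_m\le(2-\epsilon)/L$, at the price of an $\epsilon$-dependent constant) or justify the stronger inequality \eqref{eq:Lsmooth} that the paper invokes. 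A minor remark: in your telescoping the nonexpansiveness \eqref{eq:assumption1} is never actually used (the paper needs it for the distance monotonicity \eqref{eq:d}), which is harmless but signals that your route and the paper's are not interchangeable step by step.
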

\begin{proof}
For convenience, let
\begin{alignat*}{2}
\delta^*\mathcal{C}(\f^{(m)}) &\equiv \mathcal{C}(\f^{(m)}) - \mathcal{C}(\f^{*}), &\quad
\delta\mathcal{C}(\f^{(m)}) &\equiv \mathcal{C}(\f^{(m)}) - \mathcal{C}(\f^{(m+1)}), \\
\delta^*\f^{(m)} &\equiv \vec(\f^{(m)}) - \vec(\f^{*}), & \quad
\delta\f^{(m)} &\equiv \vec(\f^{(m)}) - \vec(\f^{(m+1)}).
\end{alignat*}
Then, by \eqref{eq:assumption1} and the fundamental theorem of calculus, we have
\begin{align}
\left\| \delta^*\f^{(m+1)} \right\|_2 
&= \left\| \vec( \Pi_\mathscr{F}(\overline{\f}^{(m+1)}) ) - \vec( \Pi_{\mathscr{F}}(\f^{*}) ) \right\|_2 \nonumber \\
&\leq \left\|\vec( \overline{\f}^{(m+1)} ) - \vec( \f^{*} )\right\|_2 && \text{(by \eqref{eq:assumption1})} \nonumber \\
&= \left\|\vec(\f^{(m)}) - \alpha_m \nabla\mathcal{C}(\f^{(m)}) - \vec(\f^{*})\right\|_2 \nonumber \\
&= \left\|\left( I-\alpha_m \int_0^1 \nabla^2\mathcal{C}(\f^{*} + t(\f^{(m)} - \f^{*}))\,\mathrm{d}t \right) \delta^*\f^{(m)}\right\|_2 \nonumber\\
&\hspace{10mm} \text{(by the fundamental theorem of calculus)} \nonumber \\
& \leq \max_{t\in[0,1]} \left\| I-\alpha_m \nabla^2\mathcal{C}(\f^{*} + t(\f^{(m)} - \f^{*})) \right\|_2 \left\| \delta^*\f^{(m)} \right\|_2 \nonumber \\
& \leq \left\| \delta^*\f^{(m)} \right\|_2. \label{eq:d}
\end{align}
The last inequality follows from convex and $L$-smooth properties of $\mathcal{C}$ with $\alpha_m L \leq 2$.
However, for every $k\geq 1$, it holds that
\begin{align*}
& \frac{1}{\alpha_m} \left\| \delta\f^{(m)} \right\|_2^2 - \nabla\mathcal{C}(\f^{(m)})^\top \delta\f^{(m)}\\
&= \left( \frac{1}{\alpha_m}\delta\f^{(m)} - \nabla\mathcal{C}(\f^{(m)})\right)^\top \delta\f^{(m)} \\
&= \frac{1}{\alpha_m}\left( \vec(\f^{(m+1)}) - (\vec(\f^{(m)}) -\alpha_m\nabla\mathcal{C}(\f^{(m)}) ) \right)^\top \left( \vec(\f^{(m+1)}) - \vec(\f^{(m)}) \right) \\
&= \frac{1}{\alpha_m}\left( \vec( \Pi_\mathscr{F}(\overline{\f}^{(m)}) ) - \vec(\overline{\f}^{(m)}) \right)^\top \left( \vec( \Pi_\mathscr{F}(\overline{\f}^{(m)}) ) - \vec(\f^{(m)}) \right) \leq 0, \quad \text{(by \eqref{eq:assumption2})}
\end{align*}
which implies that
\begin{equation} \label{eq:result2}
\nabla\mathcal{C}(\f^{(m)})^\top \delta\f^{(m)} \geq \frac{1}{\alpha_m} \left\| \delta\f^{(m)} \right\|_2^2.
\end{equation}
Similarly, by \eqref{eq:assumption3}, we have
\begin{equation} \label{eq:result3}
\nabla\mathcal{C}(\f^{(m)})^\top \delta^*\f^{(m+1)} \leq \frac{1}{\alpha_m} (\delta\f^{(m)})^\top\delta^*\f^{(m+1)}.
\end{equation}
Since $\mathcal{C}$ is convex, we obtain
\begin{equation} \label{eq:convex}
\delta^*\mathcal{C}(\f^{(m)}) \leq \nabla\mathcal{C}(\f^{(m)}) ^\top \delta^*\f^{(m)}. 
\end{equation}
Since $\mathcal{C}$ is $L$-smooth, we have
\begin{equation} \label{eq:Lsmooth}
\delta\mathcal{C}(\f^{(m)}) \geq \nabla\mathcal{C}(\f^{(m)})^\top \delta\f^{(m)} + \frac{L}{2} \left\| \delta\f^{(m)} \right\|_2^2.
\end{equation}
Then, by using \eqref{eq:result3}, \eqref{eq:convex} and \eqref{eq:Lsmooth}, it holds that
\begin{align}
\delta^*\mathcal{C}(\f^{(m+1)})
&= \delta^*\mathcal{C}(\f^{(m)}) -\delta\mathcal{C}(\f^{(m)}) \nonumber \\
&\leq \nabla\mathcal{C}(\f^{(m)})^\top \delta^*\f^{(m)} - \left( \nabla\mathcal{C}(\f^{(m)})^\top \delta\f^{(m)} + \frac{L}{2} \left\| \delta\f^{(m)} \right\|_2^2 \right) &&\text{(by \eqref{eq:convex}, \eqref{eq:Lsmooth})} \nonumber \\
&= \nabla\mathcal{C}(\f^{(m)})^\top \delta^*\f^{(m+1)} - \frac{L}{2} \left\| \delta\f^{(m)} \right\|_2^2 \nonumber \\
&\leq \frac{1}{\alpha_m} (\delta\f^{(m)})^\top \delta^*\f^{(m+1)} - \frac{L}{2} \left\| \delta\f^{(m)} \right\|_2^2 && \text{(by \eqref{eq:result3})} \nonumber \\
&= \frac{1}{\alpha_m} (\delta\f^{(m)})^\top \left( \delta^*\f^{(m)} -\delta\f^{(m)} \right) - \frac{L}{2} \left\| \delta\f^{(m)} \right\|_2^2 \nonumber \\
&= \frac{1}{\alpha_m} (\delta\f^{(m)})^\top \delta^*\f^{(m)} - \left(\frac{1}{\alpha_m} + \frac{L}{2}\right) \left\| \delta\f^{(m)} \right\|_2^2 \nonumber \\
& \leq \frac{1}{\alpha_m} (\delta\f^{(m)})^\top \delta^*\f^{(m)} 
\leq \frac{1}{\alpha_m} \left\| \delta\f^{(m)} \right\|_2  \left\| \delta^*\f^{(m)} \right\|_2. \label{pf:thm5.1_14a}
\end{align}
From \eqref{eq:d} and \eqref{pf:thm5.1_14a}, we have
\begin{align}
\left\| \delta\f^{(m)} \right\|_2 \geq \frac{\alpha_m \delta^*\mathcal{C}(\f^{(m+1)})}{\| \delta^*\f^{(m)} \|_2} \geq \frac{\alpha_m \delta^*\mathcal{C}(\f^{(m+1)})}{\| \delta^*\f^{(0)} \|_2}. \label{pf:thm5.1_14b}
\end{align}
Using the assumption $\alpha_m \in [\eta,\frac{2}{L}]$ and the results in \eqref{eq:result2}, \eqref{eq:Lsmooth}, and \eqref{pf:thm5.1_14b}, it holds that
\begin{align*}
&\delta^*\mathcal{C}(\f^{(m)}) - \delta^*\mathcal{C}(\f^{(m+1)}) 
= \delta\mathcal{C}(\f^{(m)}) \\
&\geq \nabla\mathcal{C}(\f^{(m)})^\top \delta\f^{(m)} + \frac{L}{2} \left\| \delta\f^{(m)} \right\|_2^2 && \text{(by \eqref{eq:Lsmooth})} \\
&\geq \frac{1}{\alpha_m} \left\| \delta\f^{(m)} \right\|_2^2 + \frac{L}{2} \left\| \delta\f^{(m)} \right\|_2^2 = \left( \frac{1}{\alpha_m} + \frac{L}{2} \right) \left\| \delta\f^{(m)} \right\|_2^2  && \text{(by \eqref{eq:result2})}\\
&
\geq \frac{\alpha_m^2}{\| \delta^*\f^{(0)} \|_2^2} \left( \frac{1}{\alpha_m} + \frac{L}{2} \right) \delta^*\mathcal{C}(\f^{(m+1)})^2 
= \frac{2\alpha_m+\alpha_m^2 L}{2\|\delta^*\f^{(0)}\|_2^2} \delta^*\mathcal{C}(\f^{(m+1)})^2 && \text{(by \eqref{pf:thm5.1_14b})} \\
&\geq \frac{2\eta + \eta^2 L}{2\|\delta^*\f^{(0)}\|_2^2} \delta^*\mathcal{C}(\f^{(m+1)})^2 
= \frac{1}{2\beta} \delta^*\mathcal{C}(\f^{(m+1)})^2,  
\end{align*}
where $\beta = \frac{\|\delta^*\f^{(0)}\|_2^2}{2\eta+\eta^2 L}$,
which implies that
\begin{align*}
\delta^*\mathcal{C}(\f^{(m+1)})^2 + 2\beta \delta^*\mathcal{C}(\f^{(m+1)}) - 2\beta \delta^*\mathcal{C}(\f^{(m)}) \leq 0.
\end{align*}
This means that
\begin{align}
0\leq \delta^*\mathcal{C}(\f^{(m+1)}) \leq \sqrt{\beta^2 + 2\beta \delta^*\mathcal{C}(\f^{(m)})} - \beta. \label{pf:thm5.1_14c}
\end{align}
Now, we use the result in \eqref{pf:thm5.1_14c} and mathematical induction on $m$ to prove \eqref{eq:thm5.1}. First, we derive the following result, which will be used in the mathematical induction step:
\begin{align*}
m\geq 1 
&\iff 4m\geq 4 
\iff 8m \geq 4m+4 
\iff m^2+8m \geq (m+2)^2 \\
&\iff \sqrt{m^2+8m} \geq m+2
\iff \sqrt{m^2+8m} + m \geq 2(m+1).
\end{align*}
By induction on $m$, we suppose
\begin{align}
\delta^*\mathcal{C}(\f^{(m-1)}) \leq \frac{4\beta + \delta^*\mathcal{C}(\f^{(0)})}{m}. \label{eq:induction_m}
\end{align}
Substituting \eqref{eq:induction_m} into \eqref{pf:thm5.1_14c}, we have
\begin{align*}
\delta^*\mathcal{C}(\f^{(m)})
&\leq \sqrt{\beta^2 + 2\beta \left(\frac{4\beta + \delta^*\mathcal{C}(\f^{(0)})}{m}\right)} - \beta \\
&= \frac{2\beta \left(\frac{4\beta + \delta^*\mathcal{C}(\f^{(0)})}{m}\right)}{\sqrt{\beta^2 + 2\beta \left(\frac{4\beta + \delta^*\mathcal{C}(\f^{(0)})}{m}\right)} + \beta} 
= \frac{2\beta \left(4\beta + \delta^*\mathcal{C}(\f^{(0)})\right)}{\sqrt{m^2\beta^2 + 2m\beta \left(4\beta + \delta^*\mathcal{C}(\f^{(0)})\right)} + m\beta} \\
&\leq \frac{2\beta \left(4\beta + \delta^*\mathcal{C}(\f^{(0)})\right)}{\sqrt{m^2\beta^2 + 2m\beta \left(4\beta \right)} + m\beta} 
= \frac{2 \left(4\beta + \delta^*\mathcal{C}(\f^{(0)})\right)}{\sqrt{m^2 + 8m} + m} \\
&\leq \frac{2 \left(4\beta + \delta^*\mathcal{C}(\f^{(0)})\right)}{2(m+1)} 
= \frac{4\beta + \delta^*\mathcal{C}(\f^{(0)})}{m+1}.
\end{align*}
\end{proof}

\subsection{Nesterov's accelerated gradient method}
Under mild assumptions of projection and nonexpensiveness properties, Theorem~\ref{thm:ProjGrad} shows that the Algorithm~\ref{alg:VOMT} has a convergence rate of $\mathcal{O}(1/m)$.
It is well known that the rate of convergence of the projected gradient method can be improved by applying Nesterov's accelerated gradient method \cite{nesterov1983} and its variation, FISTA \cite{BeTe09}. More precisely, the iteration \eqref{eq:grad} is replaced by
\begin{align}
\vec(\widehat{\f}^{(m+1)}) &= \vec(\f^{(m)}) - \alpha_m \nabla\mathcal{C}(\f^{(m)}), \nonumber \\
\vec(\overline{\f}^{(m+1)}) &= \vec(\widehat{\f}^{(m+1)}) + \frac{k}{k+3} \left(\vec(\widehat{\f}^{(m+1)}) - \vec(\widehat{\f}^{(m)}) \right), \label{eq:Nesterov}
\end{align}
for Nesterov's method, and
\begin{subequations} \label{eq:varNesterov}
\begin{align}
\vec(\widehat{\f}^{(m+1)}) &= \vec(\f^{(m)}) - \alpha_m \nabla\mathcal{C}(\f^{(m)}),   \\
\vec(\overline{\f}^{(m+1)}) &= (1-\beta_m) \vec(\widehat{\f}^{(m+1)}) + \beta_m \vec(\widehat{\f}^{(m)}), \\
\beta_m &= \frac{1-\lambda_m}{\lambda_{m+1}},~ \lambda_m = \frac{1+\sqrt{1+4\lambda_{m-1}^2}}{2},~ \lambda_0 = 0, 
\end{align}
\end{subequations}
for its variation, FISTA, respectively.
According to several numerical experiments, FISTA \eqref{eq:varNesterov} performs slightly better than the primitive Nesterov method \eqref{eq:Nesterov}. Therefore, we adopt \eqref{eq:varNesterov} to accelerate the projected gradient method in Algorithm \ref{alg:VOMT}.

\section{Numerical experiments}
\label{sec:6}

In this section, we first give numerical results to show the existence of the assumptions in \eqref{eq:rho_P} and then demonstrate the R-linear convergence of Algorithm~\ref{alg:VSEM} in Subsection \ref{subsec:6.1}. In Subsection~\ref{subsec:6.3}, numerical validation is used to present the $\frac{1}{m}$ convergence of Algorithm~\ref{alg:VOMT} in Theorem~\ref{thm:ProjGrad} and to show the accelerated effect of the FISTA accelerated gradient method. Finally, we demonstrate the numerical results from practical implementation in Subsection~\ref{subsec:prac_impl}.

Various benchmark tetrahedral mesh models demonstrated in Figure~\ref{fig:benchmark_mesh} are from Jacobson's GitHub~\cite{Jacobson}, Gu's website~\cite{GuOMT}, and the BraTS databases~\cite{BaGB21,BaAS17}.
Some tetrahedral mesh models are generated by using \texttt{iso2mesh} \cite{FaBo09,TrYF20} and \texttt{JIGSAW} mesh generators \cite{Engw15,Engw14,Engw16,EnIv14,EnIv16}.

\begin{figure}
\centering
\begin{tabular}{cccc}
\includegraphics[height=3.0cm]{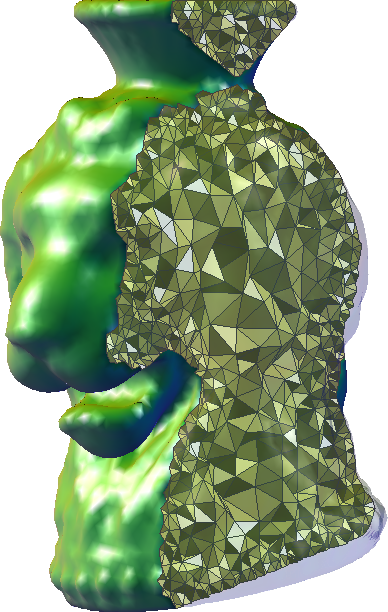} &
\includegraphics[height=3.0cm]{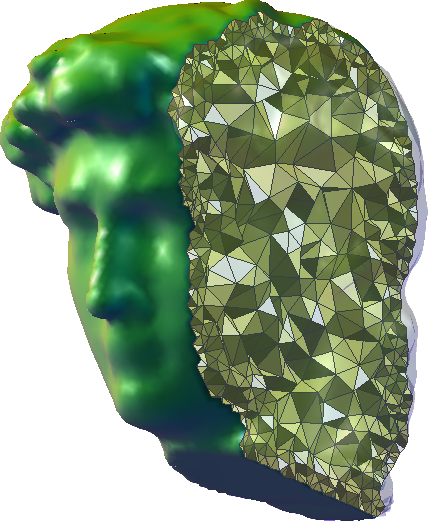} &
\includegraphics[height=3.0cm]{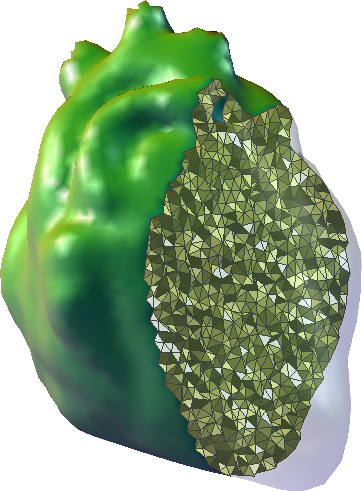} &
\includegraphics[height=3.0cm]{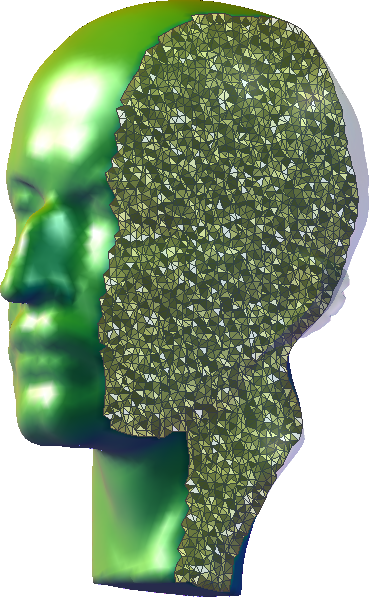} \\
(a) Lion & (b) David & (c) Heart & (d) Max Planck \\[0.5cm]
\includegraphics[height=2.7cm]{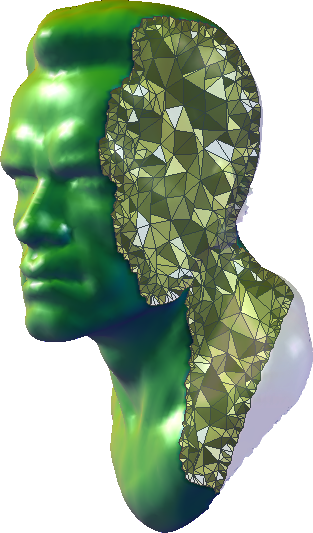} &
\includegraphics[height=2.7cm]{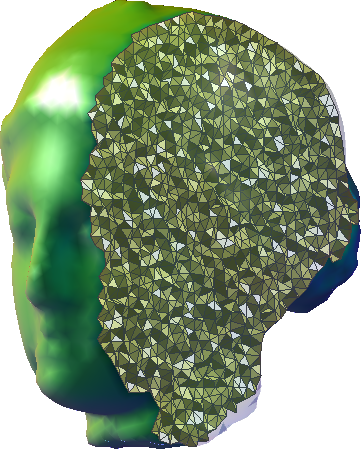} &
\includegraphics[height=2.7cm]{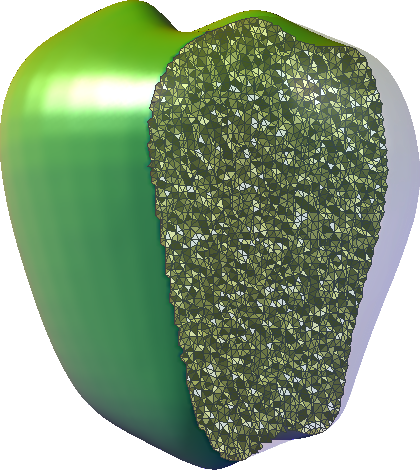} &
\includegraphics[height=2.5cm]{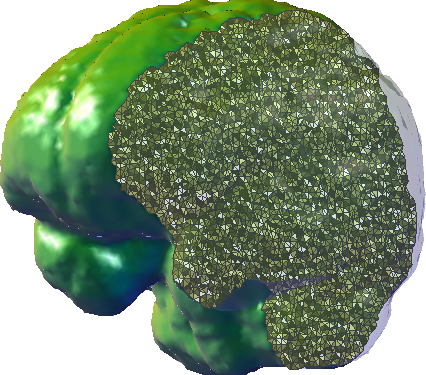} \\
(e) Arnold & (f) Igea & (g) Apple & (h) Brain
\end{tabular}
\caption{The benchmark tetrahedral mesh models.}
\label{fig:benchmark_mesh}
\end{figure}

\subsection{R-linear convergence of the VSEM algorithm}
\label{subsec:6.1}

\begin{figure}
\center
\begin{subfigure}[b]{0.32\textwidth}
\center
\includegraphics[width=\textwidth]{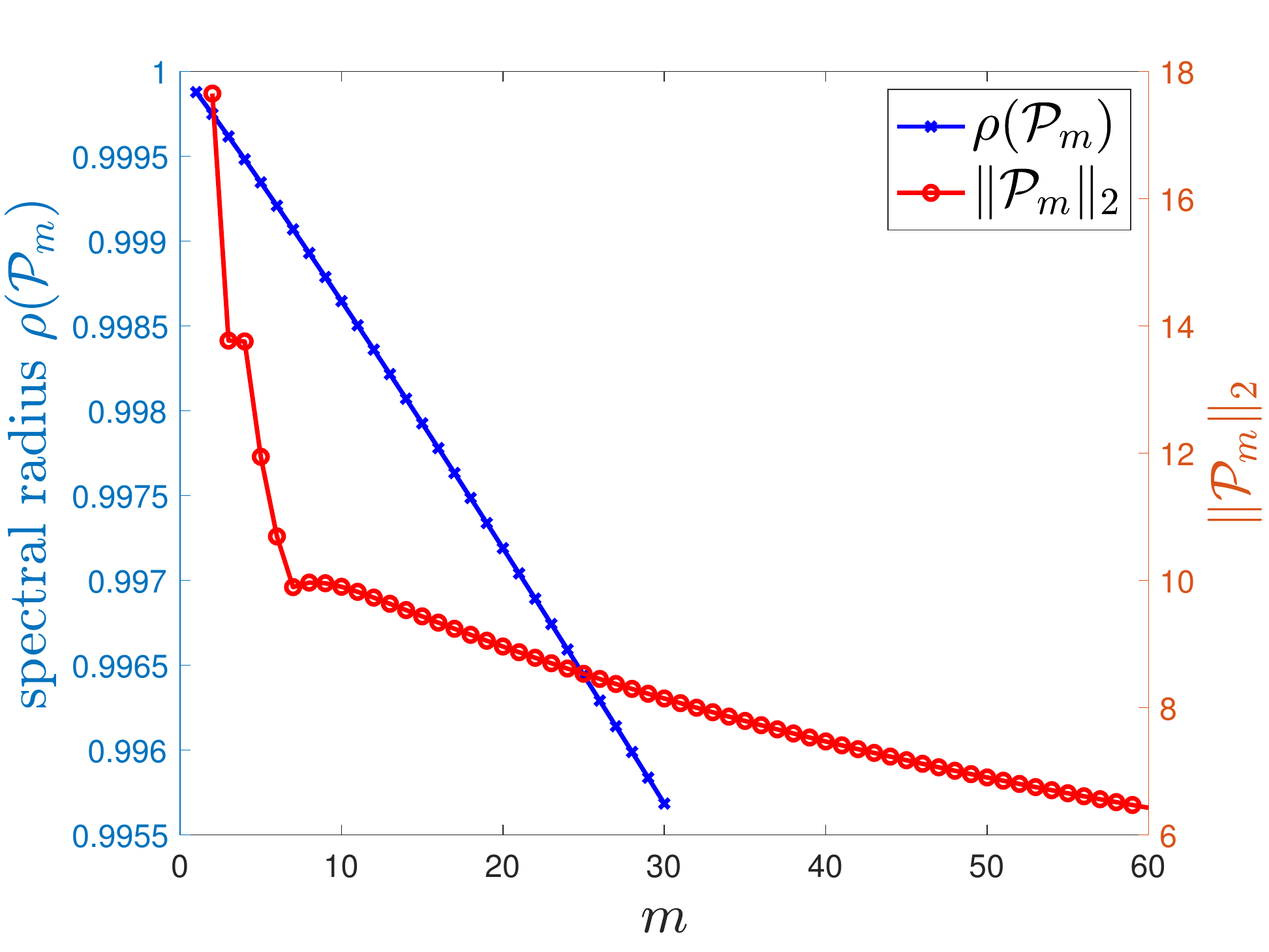}
\caption{Apple}
\label{fig:Apple_eigs_svds}
\end{subfigure}
\begin{subfigure}[b]{0.32\textwidth}
\center
\includegraphics[width=\textwidth]{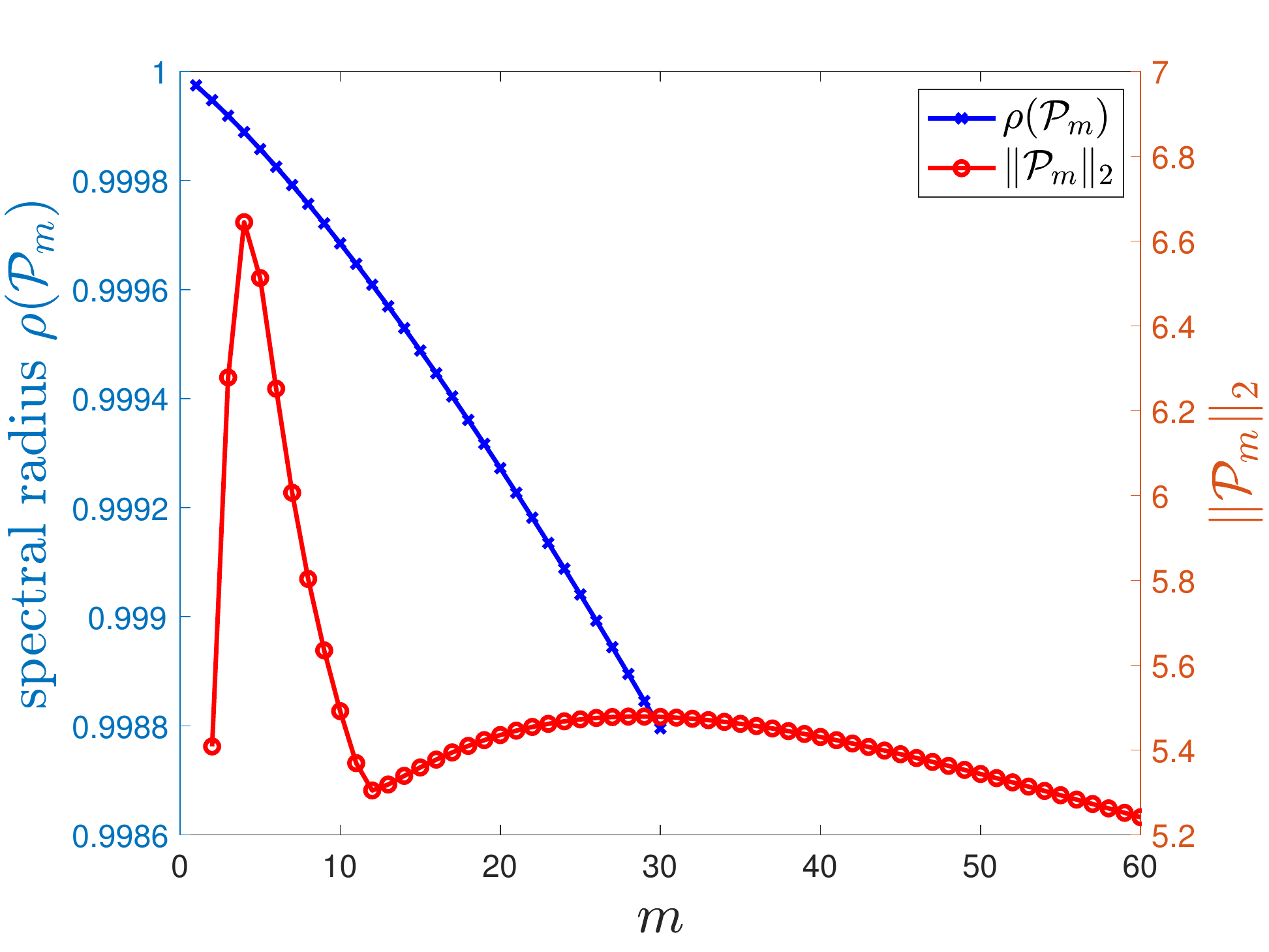}
\caption{Brain}
\label{fig:Brain_eigs_svds}
\end{subfigure}
\begin{subfigure}[b]{0.32\textwidth}
\center
\includegraphics[width=\textwidth]{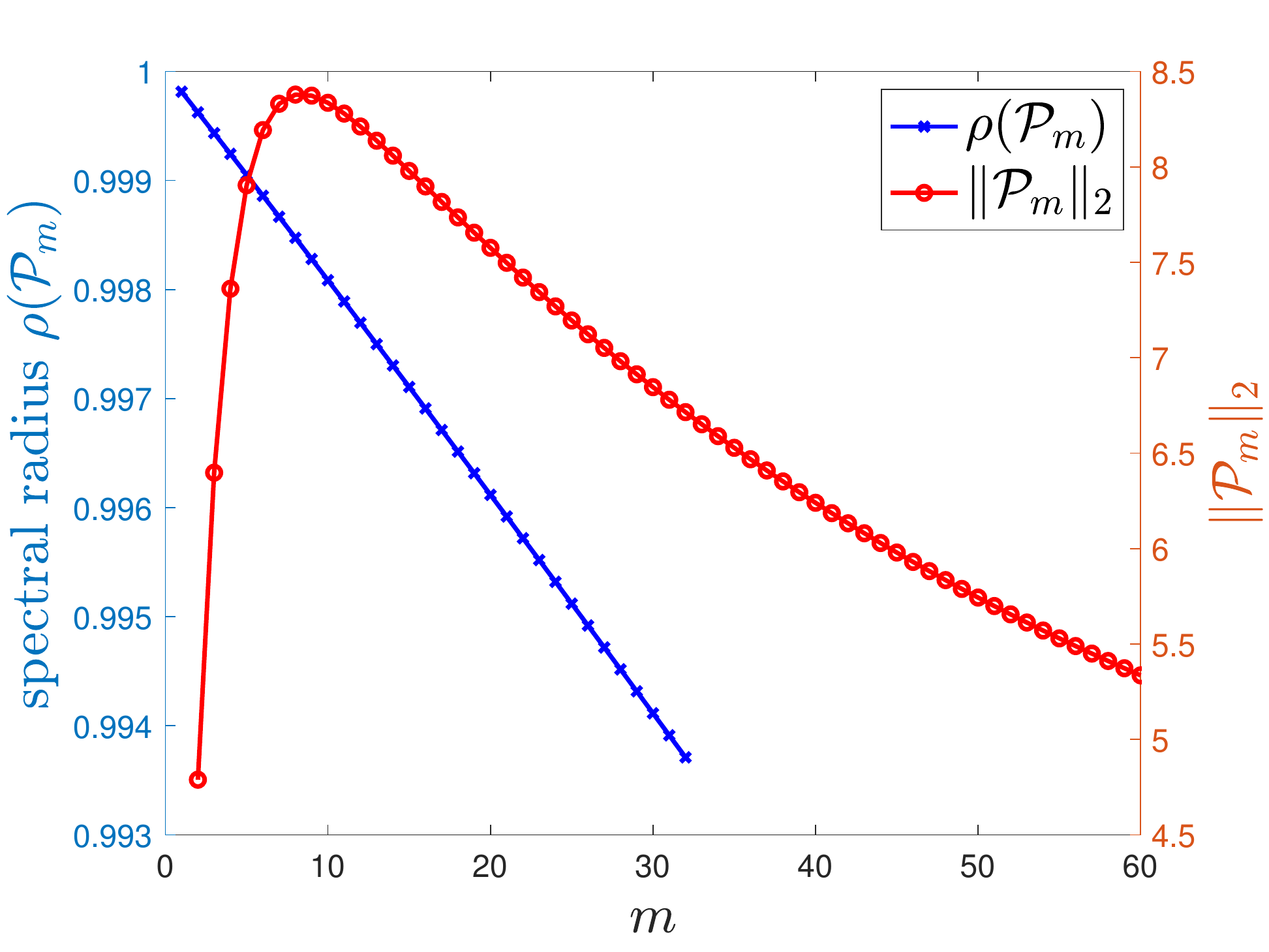}
\caption{Venus}
\label{fig:Venus_eigs_svds}
\end{subfigure}
\caption{Spectral radius of $\mathcal{P}_m$ and $\| \mathcal{P}_m \|_2$ for the Apple, Brain and Venus benchmark problems.}
\label{fig:eigs_svds}
\end{figure}

To show the R-linear convergence of the VSEM in Theorem~\ref{thm3.6}, we assume that the matrix $\mathcal{P}_m$ in \eqref{eq:mtx_Pm} satisfies the conditions in \eqref{eq:rho_P}. In Figure~\ref{fig:eigs_svds}, we demonstrate the numerical results for the spectral radius $\rho(\mathcal{P}_m)$ and $\| \mathcal{P}_m \|_2$ for the Apple, Brain and Venus benchmark models. The results show that the conditions in \eqref{eq:rho_P} hold for each benchmark model. This means that the conditions in \eqref{eq:rho_P} are reasonable assumptions.

\begin{figure}
\center
\begin{subfigure}[b]{0.49\textwidth}
\center
\includegraphics[width=\textwidth]{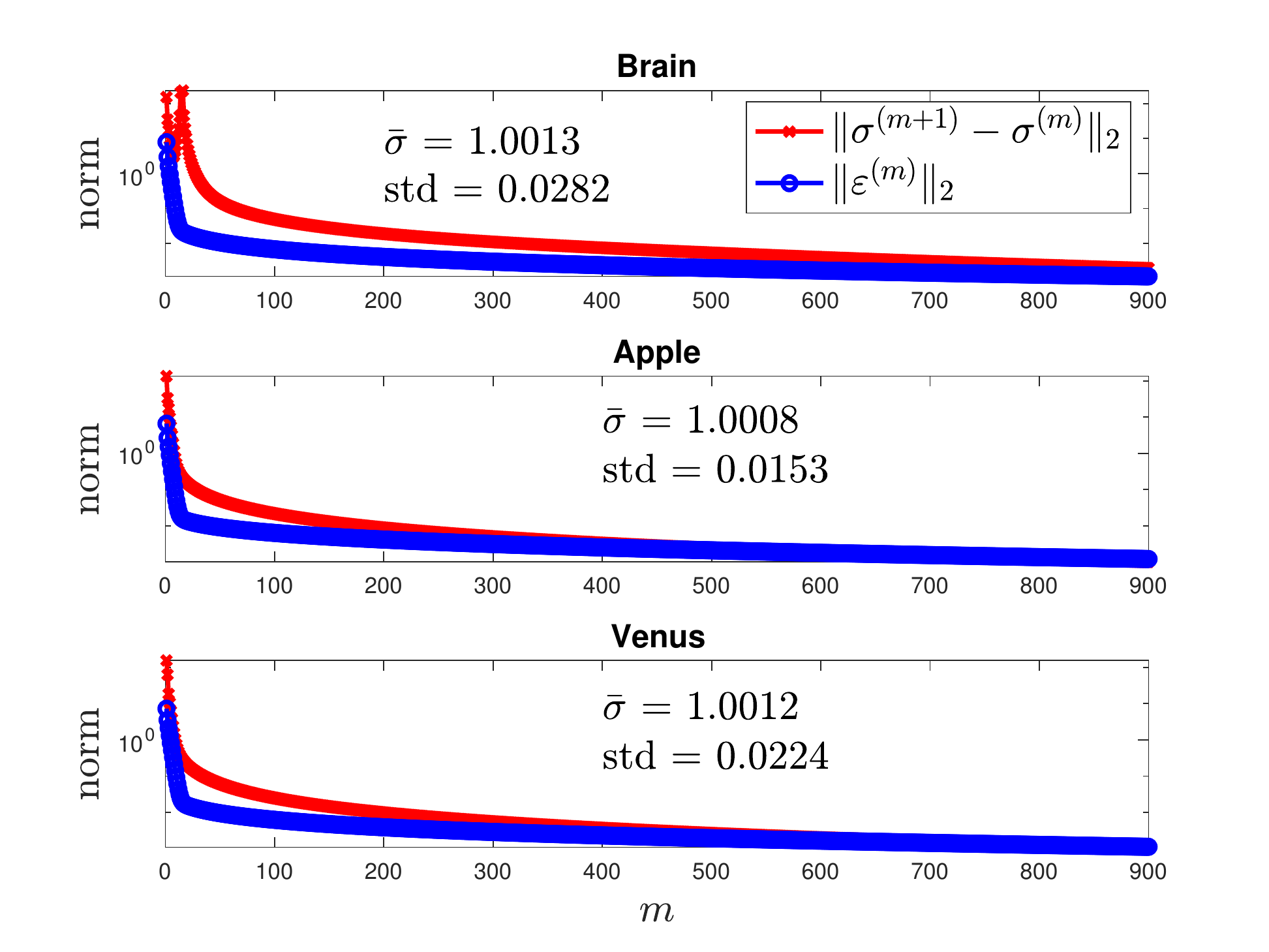}
\caption{$\| \beps^{(m)} \|_2$ and $\| \bsigma^{(m)} \|_2$}
\label{fig:conv_behav_brain_apple_venus}
\end{subfigure}
\begin{subfigure}[b]{0.49\textwidth}
\center
\includegraphics[width=\textwidth]{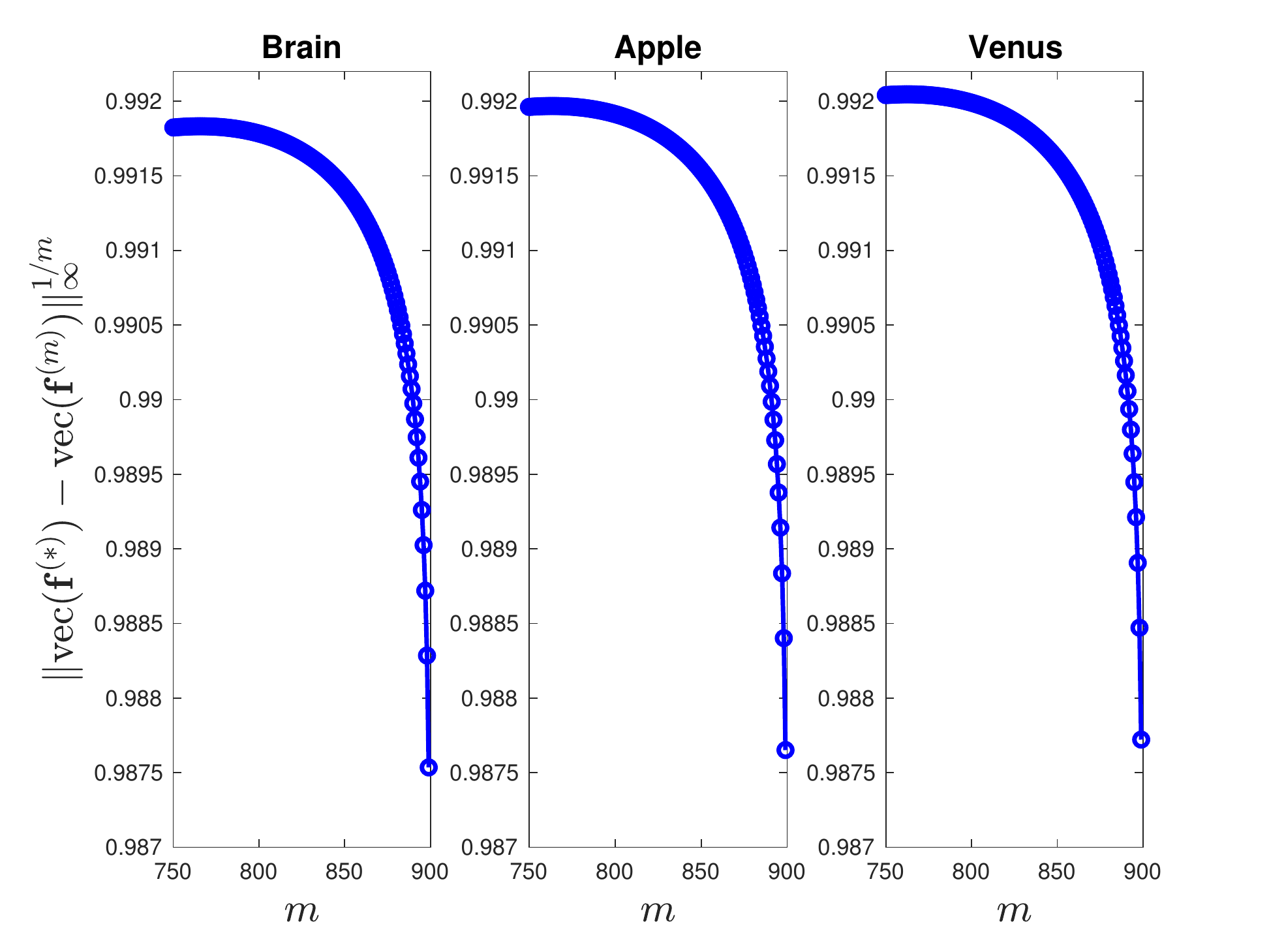}
\caption{$\|\mbox{vec}(\mathbf{f}^{(*)}) - \mbox{vec}(\mathbf{f}^{(m)}) \|_{\infty}^{1/m}$}
\label{fig:R_linear_brain_apple_venus}
\end{subfigure}
\caption{Convergence behavior of $\| \beps^{(m)} \|_2$ and stretch factor $\| \bsigma^{(m)} \|_2$ in (a) and the enlarged R-linear convergence results of $\mathbf{f}^{(m)}$ in (b) for the Apple, Brain and Venus benchmark problems. Here, $\bar{\sigma}$ and std represent the mean value and standard deviation of $\bsigma^{(900)}$, respectively.}
\label{fig:h_I2B2_R_linear_conv}
\end{figure}

Let $\bsigma^{(m)}$ denote the stretch factor vector
in \eqref{eq:stretch_factor} with $\mu(\tau) = | \tau |$ at the $m$th iteration and $\beps^{(m)}$ in \eqref{eq:reorder_eps} denote the error terms of $\f_{\I}^{s(m)}$ and $\f_{\I}^{s(m-1)}$.
In Figure~\ref{fig:conv_behav_brain_apple_venus}, we show the convergence behavior of $\| \beps^{(m)} \|_2$ and $\| \bsigma^{(m)} - \bsigma^{(m-1)} \|_2$. The numerical results tell us that both $\f_{\I}^{s(m)}$ and $\bsigma^{(m)}$ are convergent.
The mean value ($\bar{\sigma}$) and standard deviation (std) of $\bsigma^{(900)}$ for each benchmark model are shown in Figure~\ref{fig:conv_behav_brain_apple_venus}. Each $\bar{\sigma}$ is close to one, which means that the associated map is volume-preserving.

Take $\f_{\I}^{s(900)}$ as the convergence $\f_{\I}^{s(\ast)}$ and plot $\|\mbox{vec}(\mathbf{f}^{(*)}) - \mbox{vec}(\mathbf{f}^{(m)}) \|_{\infty}^{1/m}$ for various $m$. We show the results for $m = 700, \ldots, 900$ in Figure~\ref{fig:R_linear_brain_apple_venus}. These results demonstrate the R-linear convergence of the VSEM algorithm.

\begin{figure}
\center
\begin{subfigure}[b]{0.32\textwidth}
\center
\includegraphics[width=\textwidth]{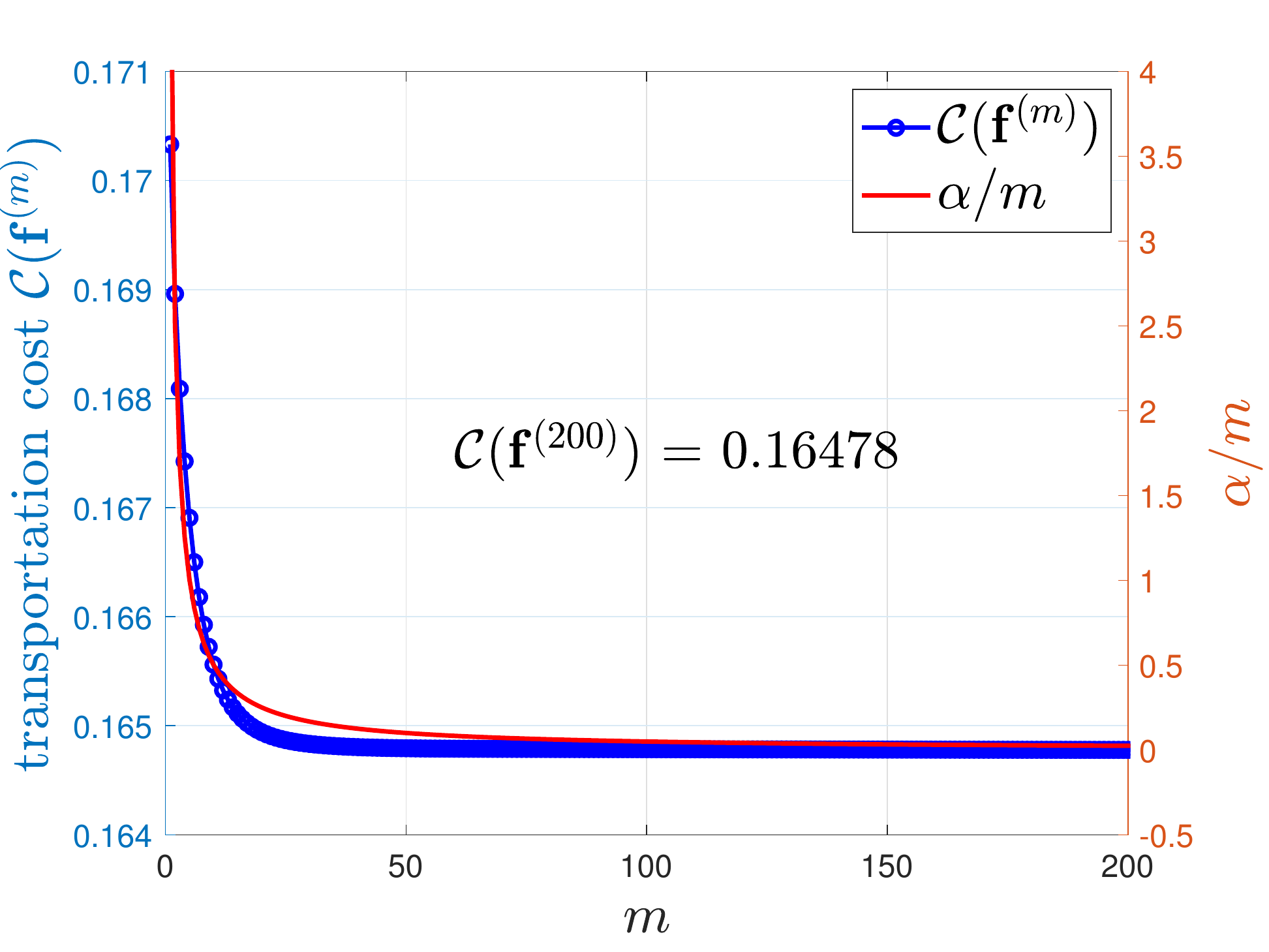}
\caption{David}
\label{fig:David_1_k_convergence}
\end{subfigure}
\begin{subfigure}[b]{0.32\textwidth}
\center
\includegraphics[width=\textwidth]{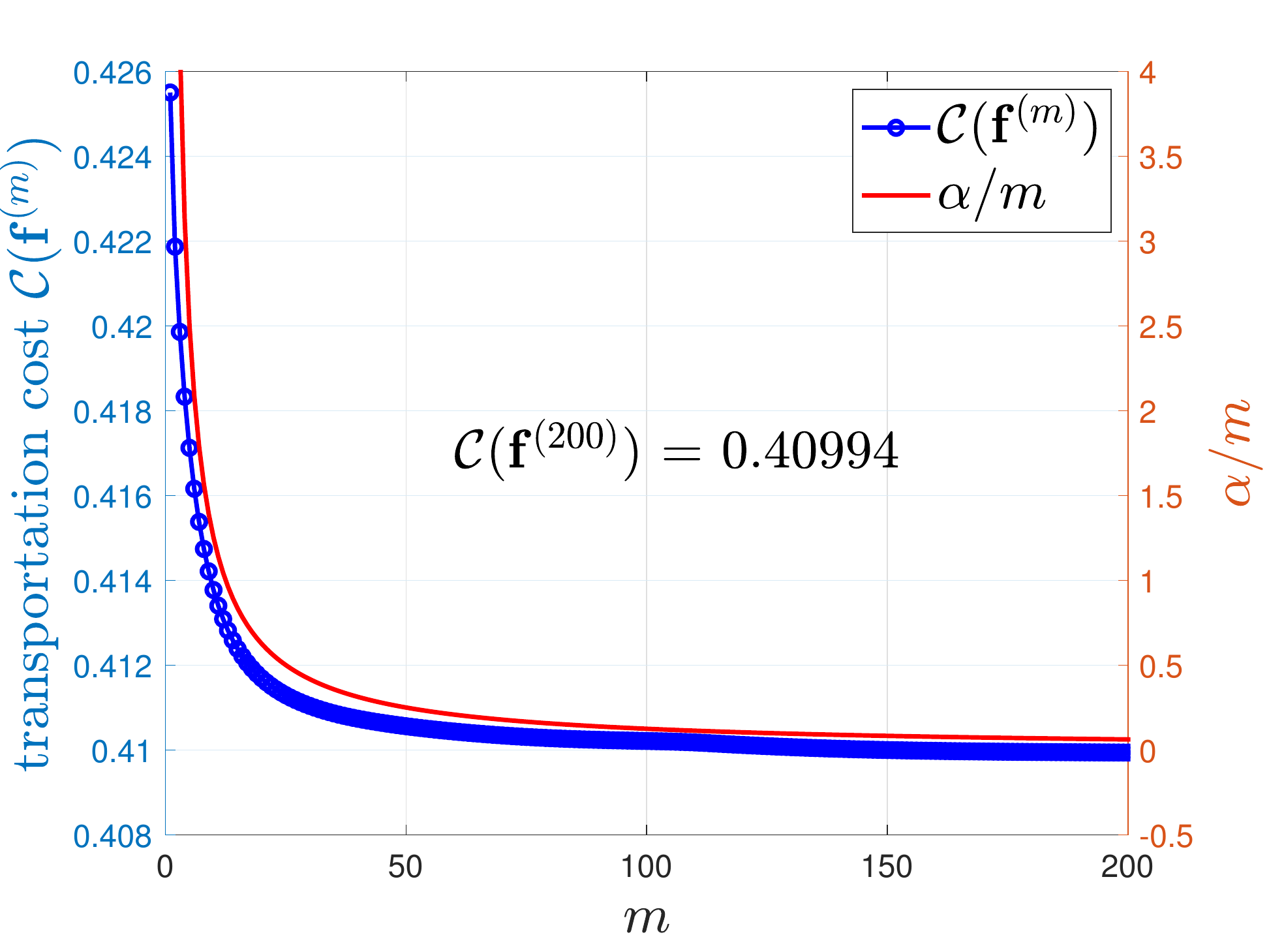}
\caption{Lion}
\label{fig:Lion_1_k_convergence}
\end{subfigure}
\begin{subfigure}[b]{0.32\textwidth}
\center
\includegraphics[width=\textwidth]{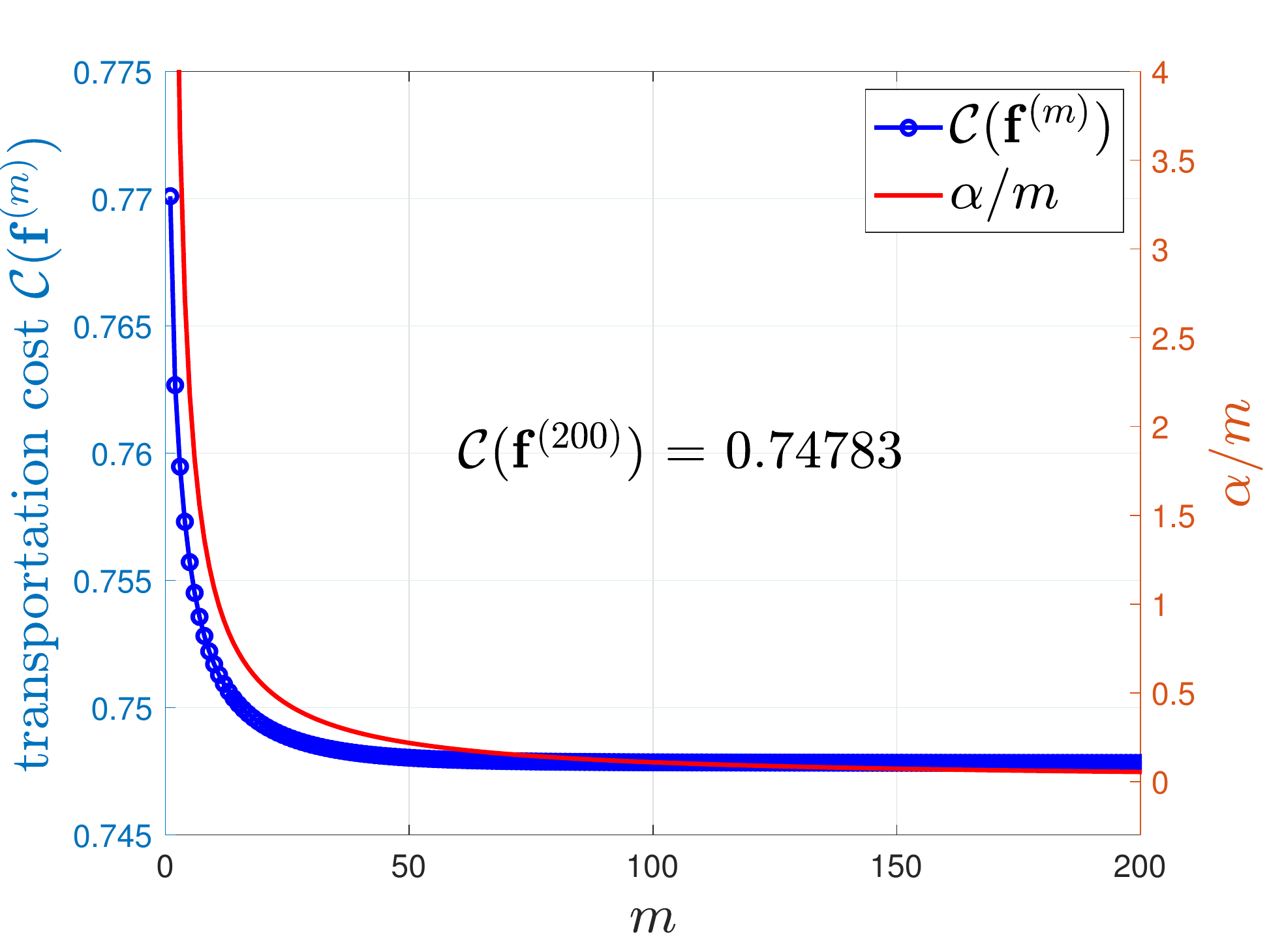}
\caption{Arnold}
\label{fig:Arnold_1_k_convergence}
\end{subfigure}
\caption{
The relationship between the number of iterations and the transportation cost of the projected gradient algorithm for computing the VOMT maps for David, Lion and Arnold benchmark models.}
\label{fig:1_k_convergence}
\end{figure}

\subsection{Convergence of the projected gradient method}
\label{subsec:6.3}

To verify the convergence of the projected gradient method in Algorithm~\ref{alg:VOMT} for computing the VOMT maps, in Figure \ref{fig:1_k_convergence} we demonstrate the relationship between the number of iterations and the transportation cost $\mathcal{C}(f)$ in \eqref{eq:cost} for the David, Lion and Arnold benchmark models. The convergence is, indeed, $\mathcal{O}(1/m)$, which is consistent with the conclusion of Theorem \ref{thm:ProjGrad}. The associated transportation costs at $m = 200$ of the David, Lion and Arnold benchmark models are $0.16478$, $0.40994$, and $0.74783$, respectively.

\begin{figure}
\center
\begin{subfigure}[b]{0.32\textwidth}
\center
\includegraphics[width=\textwidth]{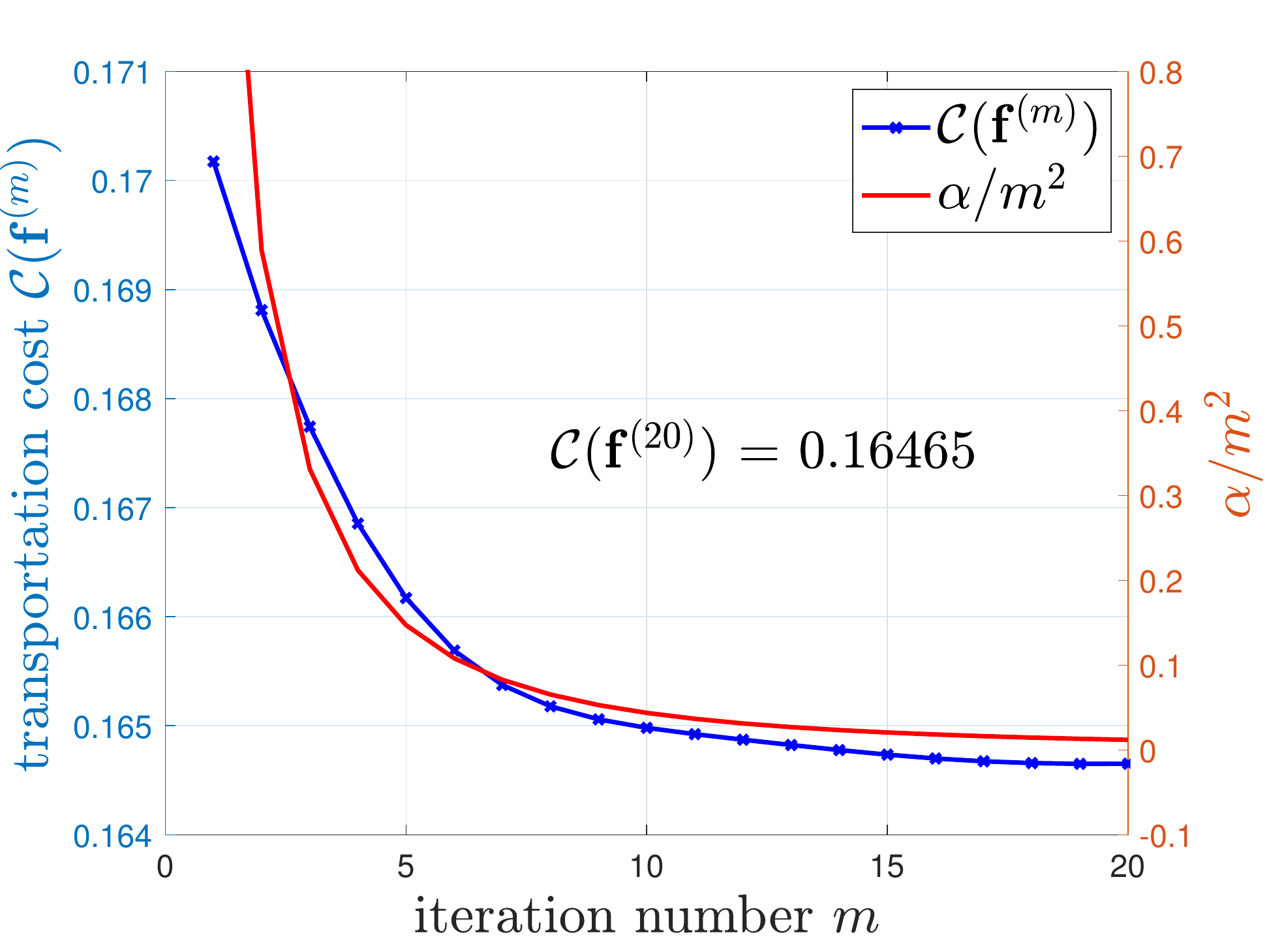}
\caption{David}
\label{fig:David_1_k2_convergence}
\end{subfigure}
\begin{subfigure}[b]{0.32\textwidth}
\center
\includegraphics[width=\textwidth]{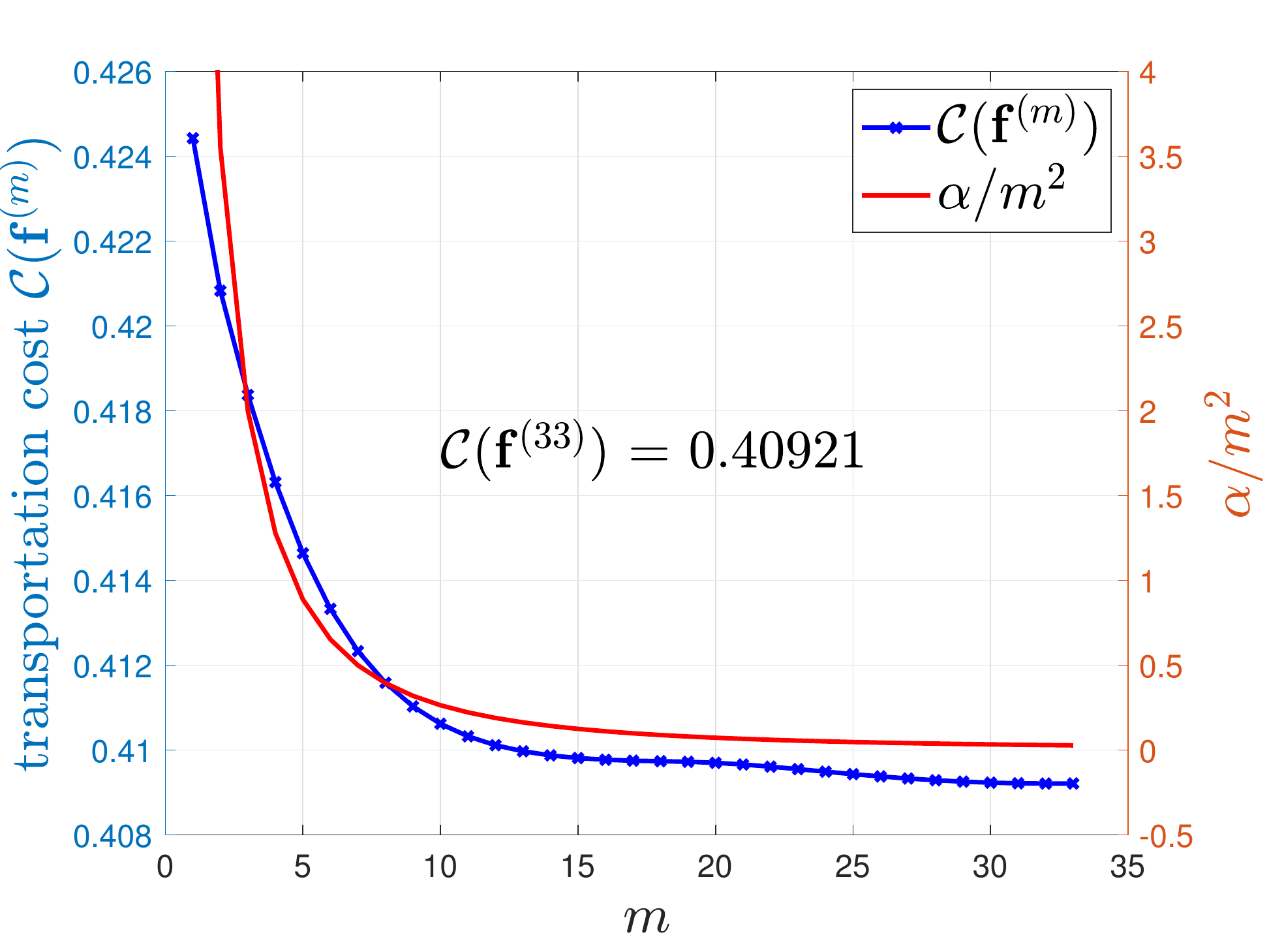}
\caption{Lion}
\label{fig:Lion_1_k2_convergence}
\end{subfigure}
\begin{subfigure}[b]{0.32\textwidth}
\center
\includegraphics[width=\textwidth]{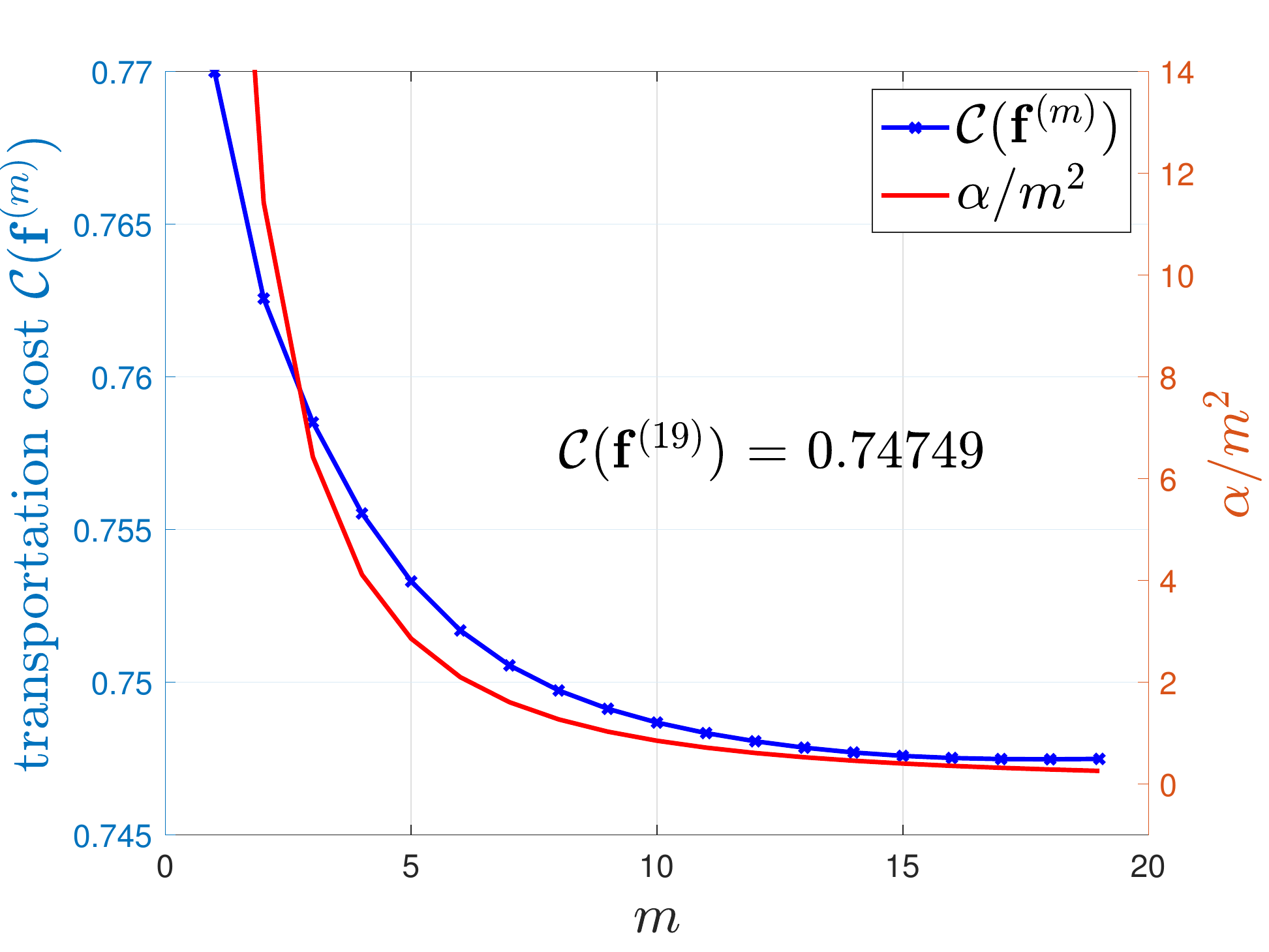}
\caption{Arnold}
\label{fig:Arnold_1_k2_convergence}
\end{subfigure}
\caption{
The relationship between the number of iterations and the transportation cost of the projected gradient algorithm with FISTA acceleration for computing the VOMT maps for David, Lion and Arnold benchmark models.}
\label{fig:1_k2_convergence}
\end{figure}

In Figure~\ref{fig:1_k2_convergence}, we show the transportation cost for computing the VOMT maps of the David, Lion, and Arnold benchmark models by using the projection gradient algorithm with FISTA acceleration. The results in the figure show that the algorithm converges in less than $35$ iterations, and the rate of convergence is $\mathcal{O}(1/m^2)$.
The associated transportation costs of the David, Lion and Arnold benchmark models are $0.16465$, $0.40921$, and $0.74749$, respectively, which reach smaller cost values than the $200$th iteration of the original projected gradient method in Figure \ref{fig:1_k_convergence}.
These results indicate that the FISTA indeed accelerates the convergence of the projection gradient algorithm.

\subsection{Practical implementations} \label{subsec:prac_impl}
In Subsections \ref{subsec:6.1} and \ref{subsec:6.3}, we give numerical validation for the theoretical convergences in Theorems~\ref{thm3.6} and \ref{thm:ProjGrad}. To reduce the computational cost, we will give some numerical observations to build up an efficient method for computing the VOMT map.

\begin{figure}
\centering
\begin{tabular}{cccc}
\includegraphics[height=4.5cm]{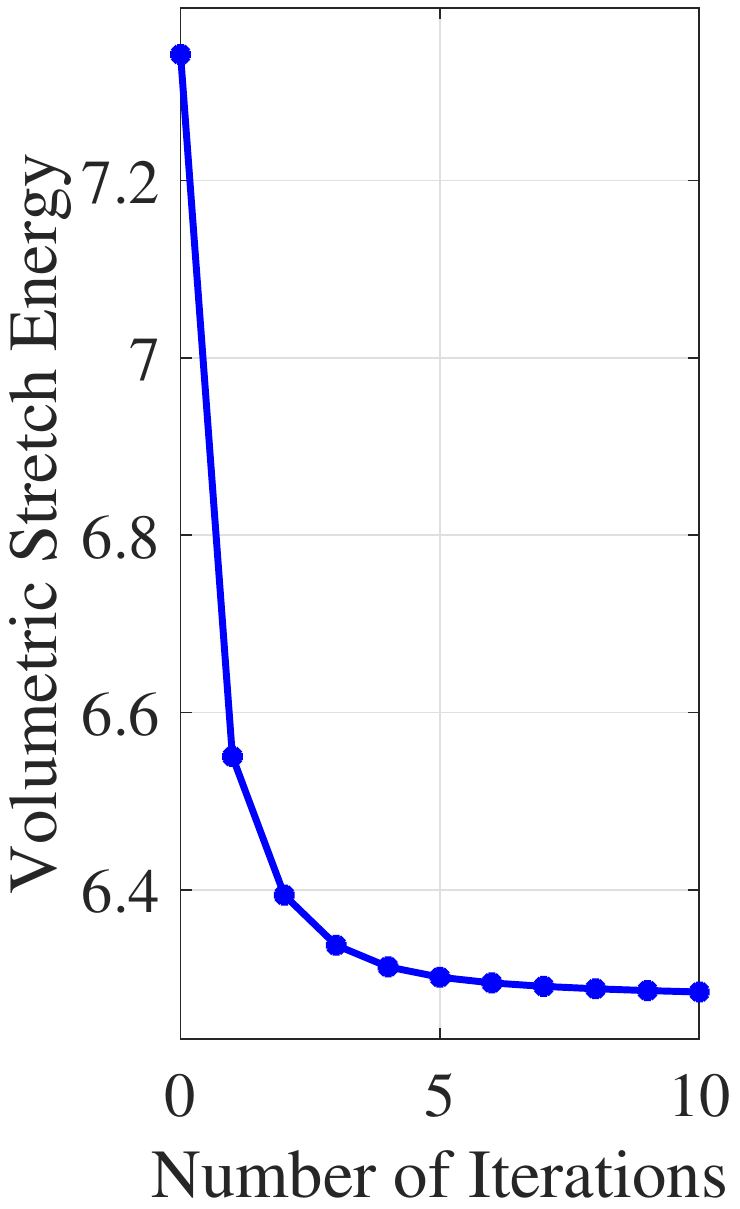} &
\includegraphics[height=4.5cm]{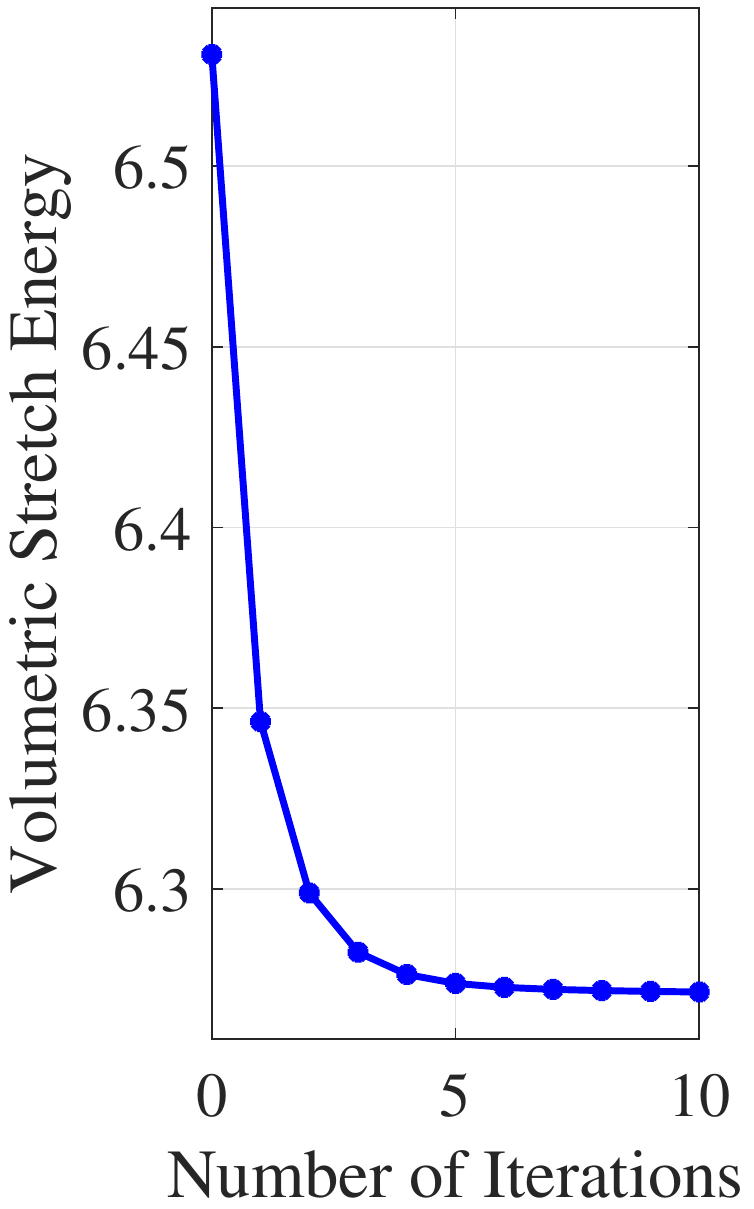} &
\includegraphics[height=4.5cm]{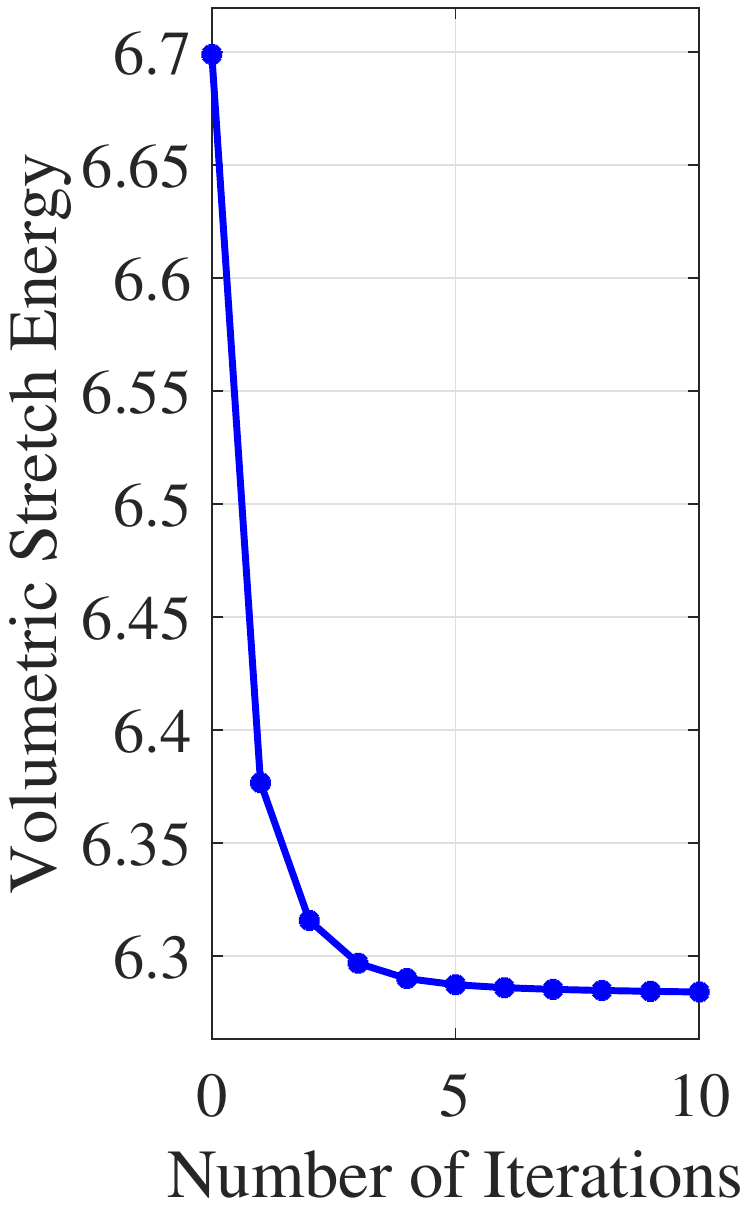} &
\includegraphics[height=4.5cm]{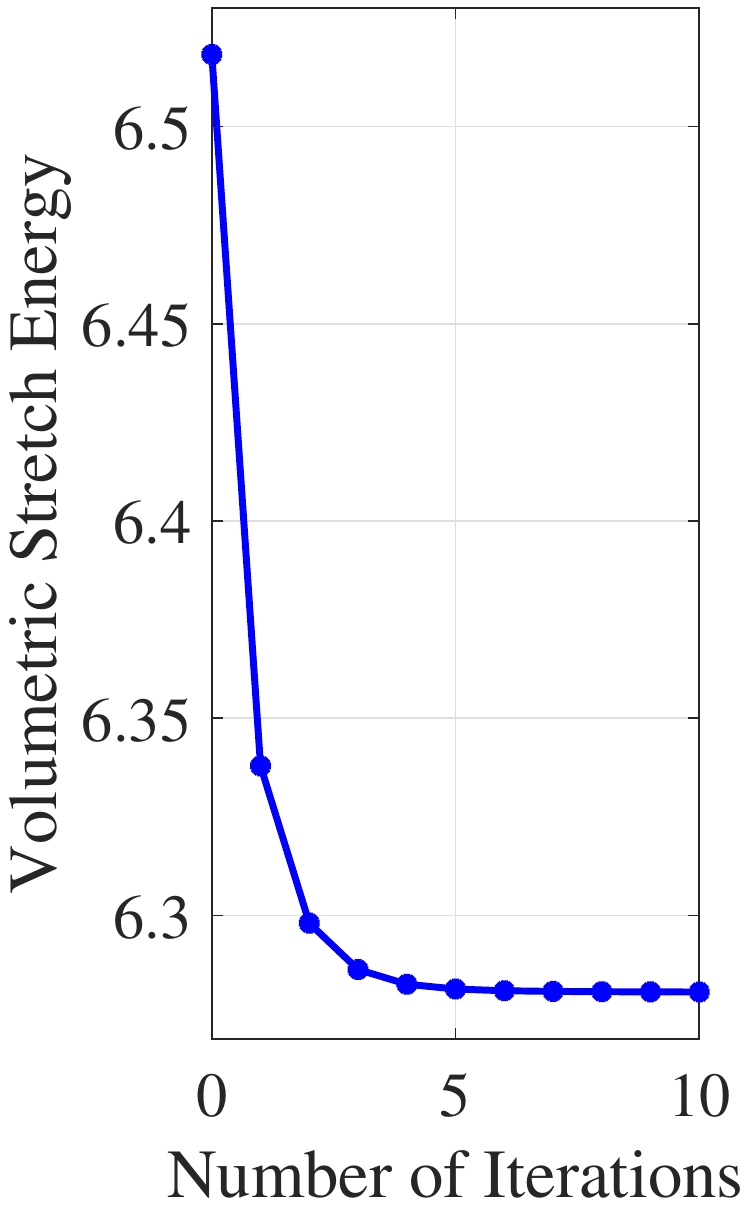} \\
(a) Arnold & (b) Igea & (c) David & (d) Apple
\end{tabular}
\caption{The relationship between the number of iterations and the volumetric stretch energy \eqref{eq:energy_fun} of the VSEM algorithm for computing volume-preserving parameterizations of the benchmark mesh models.}
\label{fig:E_V}
\end{figure}

To show the effectiveness of the VSEM algorithm in decreasing the energy \eqref{eq:energy_fun}, in Figure \ref{fig:E_V} we demonstrate the relationship between the number of iterations and the volumetric stretch energy for the benchmark mesh models. From the results in the figure, we observe that the energy decreases drastically in the first five steps and tends to converge to a constant after that. Therefore, we take the maximal iterations of Algorithm~\ref{alg:VSEM} to be $5$.

\begin{table}[]
\centering
\caption{The mean and std of the stretch factors, the volumetric stretch energy, computational time cost, and numbers of folding tetrahedra of the volume-preserving parameterization computed by the VSEM algorithm with 5 iterations.}
\label{tab:VSEM}
\begin{tabular}{lrrcccr}
\hline
\multirow{2}{*}{model}& \multirow{2}{*}{$\#(\mathbb{T}(\mathcal{M}))$} & \multirow{2}{*}{$\#(\mathbb{V}(\mathcal{M}))$} &  \multicolumn{2}{c}{stretch factor}  & \multirow{2}{*}{$E_V(f)$} & \# fold- \\
  &  &  &  mean  & std &  &  ings  \\ \hline
Arnold     &  36,875 &   6,990 & 1.0129 & 0.1716 & 6.3014 & 3 \\ 
Heart      & 103,751 &  18,408 & 1.0023 & 0.0551 & 6.2722 & 1 \\ 
Igea       & 130,375 &  22,930 & 1.0020 & 0.0462 & 6.2738 & 0 \\ 
David Head & 233,663 &  40,669 & 1.0034 & 0.0780 & 6.2873 &  0 \\ 
Max Planck & 390,361 &  66,935 & 1.0025 & 0.0814 & 6.2872 &  8 \\ 
Apple      & 559,122 & 102,906 & 1.0002 & 0.0201 & 6.2813 &  0 \\ \hline
\end{tabular}
\end{table}

In Table~\ref{tab:VSEM}, we demonstrate the mean and std of the stretch factors as well as the volumetric stretch energy of the volume-preserving parameterization computed by Algorithm~\ref{alg:VSEM} at the $5$th iteration. These results indicate that Algorithm~\ref{alg:VSEM} with $5$ iterations is effective in computing the desired volume-preserving mappings.

In addition, from Figure~\ref{fig:1_k2_convergence}, the transportation cost also drastically decreases in the first few steps. This indicates that we can set maximal iterations of the projected gradient method with FISTA acceleration to be $2$ to reduce the computational cost. Therefore, in practical implementation, the maximal iterations of the projected gradient method with FISTA acceleration (PGFISTA) are equal to $2$, and in each iteration of PGFISTA, Algorithm~\ref{alg:VSEM} with $5$ iterations is used to compute the desired volume-preserving mapping.

In Table~\ref{tab:FISTA}, we show the numerical results of the VOMT maps computed by the PGFISTA for the tetrahedral mesh models of human brains from the BraTS database \cite{BaAS17,BaGB21}.
We observe that the volume-preserving properties of the resulting VOMT maps are satisfactory, with the std of the stretch factors less than $0.1$ and the volumetric stretch energies close to $2\pi$. Additionally, it is worth noting that the resulting VOMT maps are very close to bijective with less than $0.001\%$ folding tetrahedra. These results make us more confident in the practical applications of VOMT maps in analyzing brain images.

\begin{table}
\caption{The mean and std of the stretch factors, the volumetric stretch energy, transportation cost value, and numbers of folding tetrahedra of the VOMT maps computed by the projected gradient algorithm with FISTA acceleration.}
\label{tab:FISTA}
\centering
\begin{tabular}{crrcccrr}
\hline
BraTS 20& \multirow{2}{*}{$\#(\mathbb{T}(\mathcal{M}))$} & \multirow{2}{*}{$\#(\mathbb{V}(\mathcal{M}))$} &  \multicolumn{2}{c}{stretch factor}  & \multirow{2}{*}{$E_V(f)$} & \multirow{2}{*}{$\mathcal{C}(f)$} & \# fold- \\
image no. &    &         & mean   & std    &        &        & ings \\ \hline
212 & 728,672 & 123,656 & 1.0009 & 0.0945 & 6.3325 & 0.0915 & 4 \\ 
234 & 767,428 & 129,842 & 1.0013 & 0.0979 & 6.3371 & 0.1140 & 2 \\ 
254 & 758,586 & 128,511 & 1.0010 & 0.0982 & 6.3360 & 0.0943 & 7 \\ 
258 & 701,419 & 119,090 & 1.0010 & 0.0914 & 6.3287 & 0.1054 & 6 \\ 
270 & 762,801 & 129,035 & 1.0021 & 0.0602 & 6.2905 & 0.0921 & 5 \\ 
277 & 753,014 & 127,580 & 1.0015 & 0.0933 & 6.3189 & 0.1111 & 6 \\ 
289 & 717,220 & 121,738 & 1.0025 & 0.0756 & 6.2942 & 0.0941 & 8 \\ 
294 & 734,164 & 124,593 & 1.0011 & 0.0992 & 6.3291 & 0.1129 & 7 \\ 
297 & 739,479 & 125,546 & 1.0011 & 0.0951 & 6.3330 & 0.0940 & 9 \\ 
306 & 720,254 & 122,412 & 1.0014 & 0.0918 & 6.3249 & 0.1104 & 7 \\ 
320 & 733,924 & 124,475 & 1.0013 & 0.0996 & 6.3224 & 0.0875 & 7 \\ 
321 & 655,973 & 111,732 & 1.0014 & 0.0965 & 6.3329 & 0.1266 & 7 \\ 
339 & 695,370 & 118,004 & 1.0012 & 0.0843 & 6.3192 & 0.1030 & 6 \\ 
344 & 736,101 & 124,691 & 1.0007 & 0.0847 & 6.3244 & 0.1076 & 2 \\ 
351 & 700,369 & 118,864 & 1.0015 & 0.0996 & 6.3211 & 0.1334 & 6 \\ 
352 & 736,959 & 124,868 & 1.0014 & 0.0977 & 6.3383 & 0.1138 & 2 \\ 
361 & 721,220 & 122,203 & 1.0012 & 0.0865 & 6.3163 & 0.1162 & 4 \\ 
364 & 740,970 & 125,510 & 1.0013 & 0.0854 & 6.3107 & 0.0959 & 3 \\ \hline
\end{tabular}
\end{table}

\begin{remark}
By applying the large-scale bounded distortion mapping \cite{KoAB15,LargeScaleBD}, the folding tetrahedra can be unfolded, slightly sacrificing the volume distortion, so that the resulting mapping is bijective. 
\end{remark}

\section{Concluding remarks}
\label{sec:7}

In this paper, we provided the theoretical foundation for discrete volumetric stretch energy minimization for computing volume-preserving parameterizations and developed the associated efficient VSEM algorithm with guaranteed R-linear convergence.
In addition, based on the VSEM algorithm, we proposed a projected gradient method with Nesterov-based acceleration for the computation of VOMT maps with a guaranteed $\mathcal{O}(1/m)$ convergence rate.
The associated numerical experiments were demonstrated to justify the consistency of the theoretical and numerical results.
Numerical results also showed the effectiveness and accuracy of the proposed VSEM and VOMT algorithms. Such encouraging results provide a solid foundation for further applications of volume-preserving parameterizations and OMT maps.

\section*{Acknowledgments}
We are grateful to Professor Tiexiang Li from Southeast University and Nanjing Center for Applied Mathematics for valuable discussions and for providing the medical 3D MRI brain images in Table \ref{tab:FISTA}.

\bibliographystyle{abbrv}
\bibliography{VSEM}

\end{sloppypar}
\end{document}